\newcommand{\R}{\mathbb{R}}
\renewcommand{\epsilon}{\varepsilon}
\numberwithin{equation}{section}
\newtheorem{theorem}{Theorem}[section]
\newtheorem{lemma}[theorem]{Lemma}
\newtheorem{definition}[theorem]{Definition}
\newtheorem{corollary}[theorem]{Corollary}
\renewcommand{\le}{\leqslant}
\renewcommand{\ge}{\geqslant}
\newcommand{\Per}{\operatorname{Per}}
\renewcommand{\epsilon}{\varepsilon}
\newcommand{\e}{\varepsilon}
\title[Boundary continuity of nonlocal minimal surfaces]{Boundary continuity of nonlocal minimal surfaces \\ in domains with singularities \\ and a problem posed by
Borthagaray, Li, and Nochetto}
\thanks{OS was supported by the NSF grant DMS-2055617.
EV was supported by the Australian Laureate Fellowship FL190100081
and by the UWA HiCi Fund.}
\author[Serena Dipierro,
Ovidiu Savin, and Enrico Valdinoci]{Serena Dipierro${}^{(1)}$\and
Ovidiu Savin${}^{(2)}$
\and
Enrico Valdinoci${}^{(1)}$}
\begin{document}
\maketitle

{\scriptsize \begin{center} (1) -- Department of Mathematics and Statistics\\
University of Western Australia\\ 35 Stirling Highway, WA6009 Crawley (Australia)\\
\end{center}
\scriptsize \begin{center} (2) --
Department of Mathematics\\
Columbia University\\
2990 Broadway, NY 10027
New York (USA)
\end{center}
\bigskip

\begin{center}
E-mail addresses:
{\tt serena.dipierro@uwa.edu.au},
{\tt savin@math.columbia.edu},
{\tt enrico.valdinoci@uwa.edu.au}
\end{center}
}
\bigskip\bigskip

\begin{abstract}
Differently from their classical counterpart, nonlocal minimal surfaces are known to present boundary
discontinuities, by sticking at the boundary of smooth domains.

It has been observed numerically by
J. P. Borthagaray, W. Li, and R. H. Nochetto ``that stickiness is larger near the concave portions of the boundary than near the
convex ones, and that it is absent in the corners of the square'', leading to the conjecture
``that there is a relation between the amount of stickiness on~$\partial\Omega$
and the nonlocal mean curvature of~$\partial\Omega$''.

In this paper, we give a positive answer to this conjecture, by showing that the nonlocal minimal surfaces are continuous at convex corners of the domain boundary and discontinuous at concave corners.

More generally, we show that boundary continuity for nonlocal minimal surfaces holds true at all points in which the domain is not better than~$C^{1,s}$, with the singularity pointing outward,
while, as pointed out by a concrete example, discontinuities may occur at all point in which the domain possesses an interior touching set of class~$C^{1,\alpha}$ with~$\alpha>s$.
\end{abstract}

\section{Introduction}

\subsection{Motivations}
While classical minimal surfaces arise as minimizers of the perimeter functional
and model classical surface tensions, nonlocal minimal surfaces aim at capturing long-range interactions induced by kernels with\footnote{Integrable kernels have been also taken into account
and produce a different theory, see~\cite{MR3930619}.} a fat tail. The systematic study of nonlocal minimal surfaces
started in~\cite{MR2675483} and covered many topics, such as interior regularity~\cite{MR3090533, MR3107529, MR3680376, MR3798717, MR3981295, MR4116635},
geometric flows~\cite{MR2487027, MR3401008, MR3713894, MR3778164, MR3951024, MR4000255, MR4104832, MR4175821}, front propagation~\cite{MR2564467},
nonlocal isoperimetric inequalities~\cite{MR2469027, MR2799577, MR3264796, MR3322379, MR3412379}, surfaces of constant nonlocal mean curvature~\cite{MR3485130, MR3744919, MR3770173, MR3836150, MR3881478}, capillarity theories~\cite{MR3717439, MR4404780}, limit embeddings~\cite{MR1945278, MR2033060},
long-range phase transitions~\cite{MR1372427, MR2948285},
fractal analysis~\cite{MR1111612, MR3912427}, problems with higher codimension~\cite{MR3733825, MR4058510},
just to name a few directions.

An interesting feature discovered in~\cite{MR3596708} and further analyzed in~\cite{MR3926519, MR4104542, MR4178752, MR4184583, MR4392355, MR4548844}
consists in a boundary behavior for nonlocal minimal surfaces which is significantly different from the classical case. Namely, at least in convex domains, classical minimal surfaces detach from the boundary in a transversal way. Conversely, nonlocal minimal surfaces can adhere to the boundary of the domain (and actually present the strong tendency to do so). This phenomenon, which is also related to an obstacle problem for nonlocal minimal surfaces~\cite{MR3532394}, affects the boundary regularity, since, in the presence of stickiness, nonlocal minimal surfaces do not attain their external datum in a continuous way.

In this regard, an intriguing conjecture was posed by
J. P. Borthagaray, W. Li, and R. H. Nochetto~\cite{MR4294645}, according to which
stickiness never occurs at convex corners of the boundary, while typically manifesting itself at concave corners.

This article is motivated by this conjecture, which we aim to address in our main results.

\subsection{Main results}
{F}rom now on, we suppose that~$n\ge2$ and~$\Omega$ will denote an open subset of~$\R^{n+1}$ with Lipschitz boundary.

Given~$E\subseteq\R^{n+1}$, we consider the $s$-perimeter functional in~$\Omega$ defined by
$$ \Per_s(E,\Omega):= L_s(E\cap\Omega,E^c\cap\Omega)+L_s(E\cap\Omega,E^c\cap\Omega^c)+L_s(E\cap\Omega^c,E^c\cap\Omega),$$
where~$s\in(0,1)$ and
$$ L_s(A,B):=\iint_{A\times B} \frac{dx\,dy}{|x-y|^{n+1+s}}.$$
As usual, the superscript~``$c$'' denotes the complementary set in~$\R^{n+1}$,
and all sets are implicitly assumed to be measurable.

A topical objective of interest is the family of minimizers for the $s$-perimeter, as recalled here below:

\begin{definition}\label{SL-D}
If~$\Omega$ is bounded, we say that~$E\subseteq\R^{n+1}$ is an~$s$-minimal set in~$\Omega$ if~$\Per_s(E,\Omega)<+\infty$ and
$$ \Per_s(E,\Omega)\le\Per_s(F,\Omega)$$
for all sets~$F\subseteq\R^{n+1}$ such that~$E\setminus\Omega=F\setminus\Omega$.
\end{definition}

In this setting, \cite[Theorem 3.2]{MR2675483} ensures, given~$E_0\subseteq\R^{n+1}$,
the existence of an~$s$-minimal set~$E$ in~$\Omega$ such that~$E\setminus\Omega=E_0\setminus\Omega$.

We will now focus our attention on the case of cylindrical domains, i.e. we assume from now on that~$\Omega=\omega\times\R$, for some bounded set~$\omega$ in~$\R^{n}$. In this framework,
since~$\Omega$ is unbounded, Definition~\ref{SL-D} needs to be slightly modified (see also~\cite{MR3827804} for additional information\footnote{As a technical observation, we mention that a natural class of minimizers in the case of graphical external data is given by
that of~$s$-minimal graphs, namely of $s$-minimal sets which can be written as graphs, say, in the $(n+1)$th coordinate direction. We refer to~\cite[Theorem~1.2]{MR3516886}
and, more generally,~\cite[Theorem~1.3]{MR4279395} for existence results of $s$-minimal graphs.
See also~\cite{MR3934589} for a specific regularity theory for $s$-minimal graphs.
However, the setting of $s$-minimal graphs will not be explicitly used in this paper, to maintain the exposition as simple as possible.}
on situations of this type):

\begin{definition}
We say that~$E\subseteq\R^{n+1}$ is an~$s$-minimal set in~$\Omega$ if
it is an $s$-minimal set in every bounded open set~$\Omega'$ contained in~$\Omega$
according to Definition~\ref{SL-D}.
\end{definition}

Since we deal with cylindrical domains~$\Omega=\omega\times\R\subseteq\R^{n+1}$, it is often useful to denote points in~$\R^{n+1}$ by~$X=(x,x_{n+1})=(x',x_{n},x_{n+1})\in\R^{n-1}\times\R\times\R$.
The $n$-dimensional ball centered at~$p\in\R^n\times\{0\}$ and of radius~$\rho>0$ will be denoted by
$$ B_\rho(p):=\Big\{ (x,0)\in\R^{n}\times\{0\} {\mbox{ s.t. }} |x-p|<\rho
\Big\}.$$
We also consider the corresponding cylinder in~$\R^{n+1}$ given by
$$ C_\rho(p):=B_\rho(p)\times\R.$$
When~$p=0$, we use the short notations~$B_\rho$ and~$C _\rho$.

For an $(n+1)$-dimensional ball, we use the notation, given~$P\in\R^{n+1}$,
$$ {\mathcal{B}}_\rho(P):=
\Big\{ X\in\R^{n+1} {\mbox{ s.t. }} \big|X-P\big|<\rho
\Big\}.$$

\begin{figure}[h]
\fbox{\includegraphics[height=4.5cm]{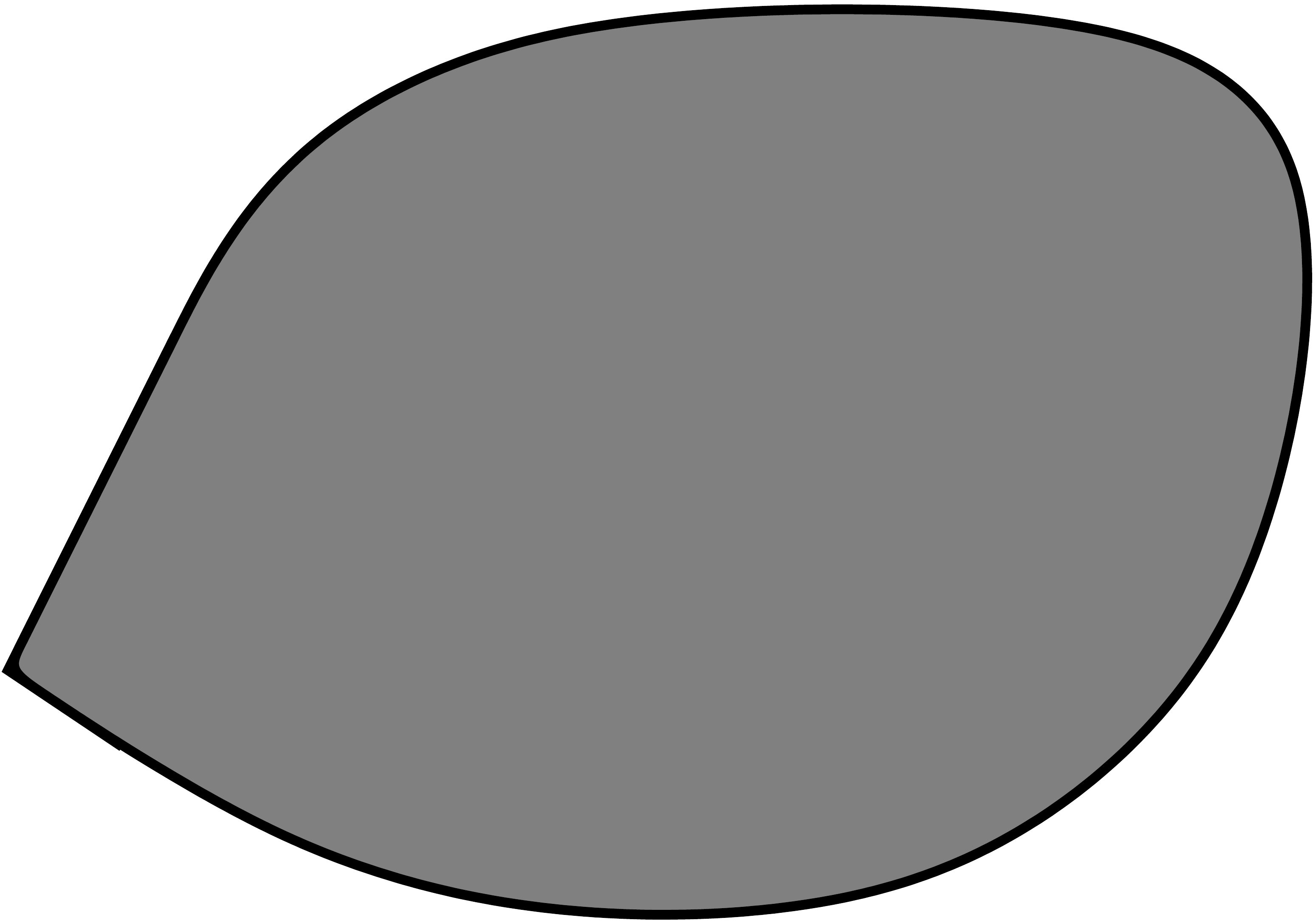}}$\,$
\fbox{\includegraphics[height=4.5cm]{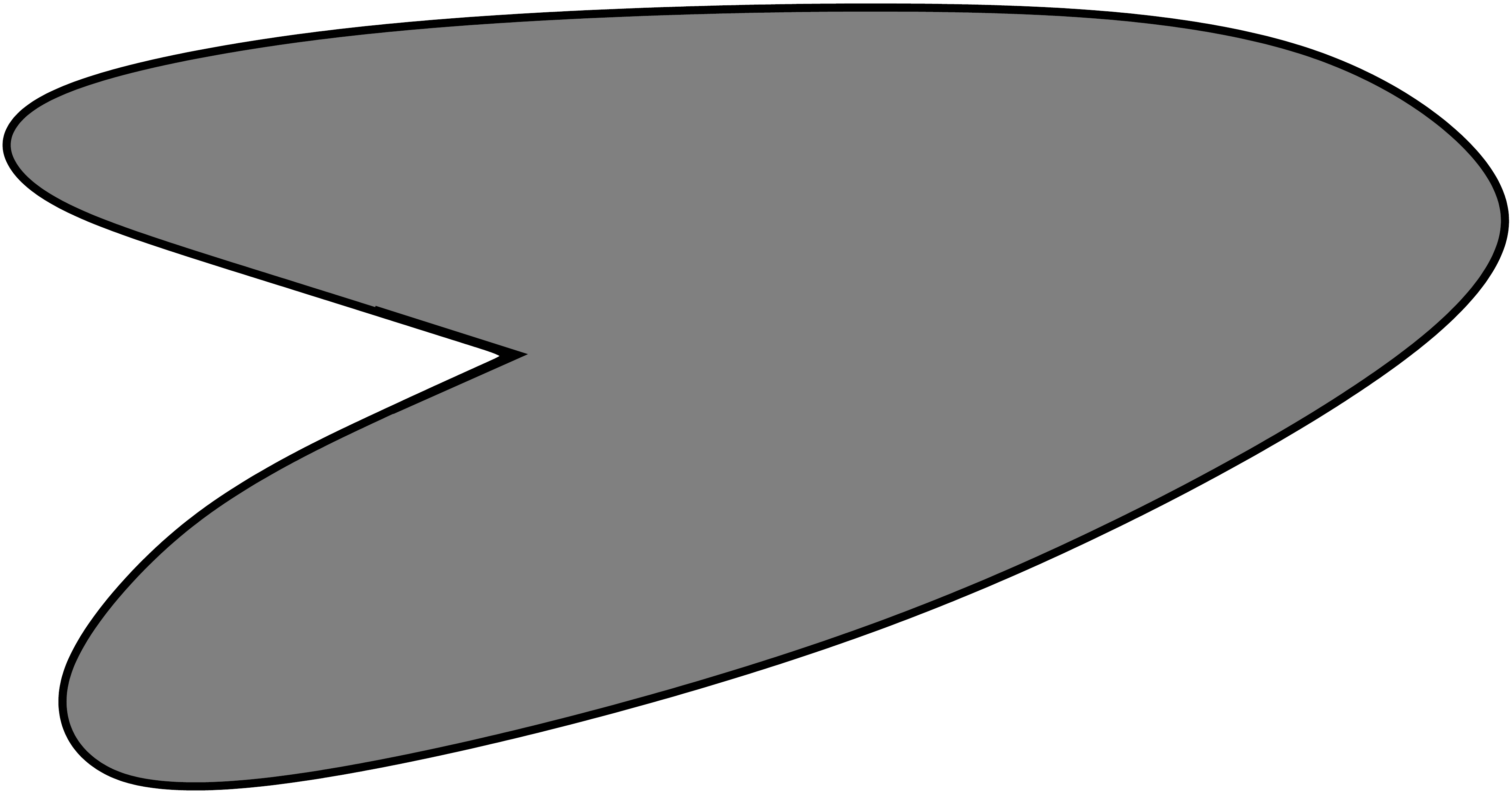}}
\caption{Left: a domain with an outward pointing singularity. Right:
a domain with an inward pointing singularity.}\label{FIGAGG}
\end{figure}

Our main result establishes continuity of the $s$-minimal sets at the points in which
the domain is not better than~$C^{1,s}$, with the singularity pointing outward of the domain (see Figure~\ref{FIGAGG} for a diagram of domains
with singularities pointing outward and inward).
The precise statement goes as follows:

\begin{theorem}[Continuity of the $s$-minimal sets for domains with outward singularities]\label{THM:CON}
Suppose that there exist~$\rho>0$ and~$\varphi:\R^{n-1}\to[0,+\infty)$, with
\begin{equation}\label{KASMFMA}
\varphi(0)=0\end{equation}
and
\begin{equation}\label{CBET}
\varphi(x')\ge c|x'|^\beta\qquad{\mbox{for all}}\quad x'\in \R^{n-1} \quad{\mbox{ with }} |x'|<\rho 
,\end{equation}
for some~$c>0$ and~$\beta\in(0,s+1]$,
and such that
\begin{equation}\label{INC:1} \omega\cap B_\rho =\{ x_{n}>\varphi(x')\}\cap B_\rho.\end{equation}
Assume that
\begin{equation}\label{INC:22} E_0=\{x_{n+1}<\psi(x',x_n)\}\end{equation}
for some~$\psi\in L^\infty_{\rm loc}(\R^n)$ such that
\begin{equation}\label{INC:2}
\psi(x',x_n)\le0 {\mbox{ for all }}(x',x_n)\in B_\rho.
\end{equation}

Let~$E$ be an $s$-minimal set in~$\Omega$ with~$E\setminus\Omega=E_0\setminus\Omega$.

Then, for every~$\e>0$ there exists~$\delta>0$ such that
\begin{equation}\label{CONT:1}
E\cap C_\delta\subseteq\{ x_{n+1}\le\e\}.
\end{equation}
\end{theorem}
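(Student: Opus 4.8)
The plan is to argue by contradiction and construct a competitor that beats $E$ near the origin, exploiting the fact that the outward singularity of $\omega$ forces the external datum $E_0$ to lie below the hyperplane $\{x_{n+1}=0\}$ inside a cusp-shaped region whose opening is controlled by $\varphi(x')\ge c|x'|^\beta$ with $\beta\le s+1$. Concretely, I would fix $\e>0$ and suppose, for contradiction, that for a sequence $\delta_k\to0$ the set $E$ contains a point $X_k\in C_{\delta_k}$ with $(X_k)_{n+1}>\e$. After a rescaling $X\mapsto X/\delta_k$, the blow-up sets $E_k:=E/\delta_k$ are $s$-minimal in $C_{\delta_k^{-1}}\cap(\omega/\delta_k\times\R)$, the rescaled domains $\omega/\delta_k$ converge (locally, in the appropriate sense) to the half-space $\{x_n>0\}$ because $\beta\le s+1$ guarantees that $\varphi(\delta_k x')/\delta_k = \delta_k^{\beta-1}\varphi(x')/\delta_k^{\beta}\cdot\delta_k \to 0$ uniformly on compacts — i.e., the graph of $\varphi$ flattens out under this scaling precisely when $\beta\le s+1$ — and the rescaled external data still satisfy $\psi\le0$ in the relevant region. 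The point $X_k/\delta_k$ lives on $\partial B_1$ but its last coordinate blows up, so in the limit $E_\infty$ contains the whole vertical line $\{0\}\times\R$ pushed up, which contradicts the classification / maximum principle for $s$-minimal cones in a half-cylinder with downward datum.

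The key technical ingredient, and the step I expect to be the main obstacle, is turning the heuristic "$E_0$ lies below $\{x_{n+1}=0\}$ near $\partial\Omega$ plus the domain is worse than $C^{1,s}$" into a quantitative barrier that a priori confines $E$ to $\{x_{n+1}\le\e\}$ in a genuinely small cylinder $C_\delta$, without passing to a limit first. I would therefore prefer a direct barrier argument over the compactness argument: construct an explicit supersolution for the nonlocal mean curvature operator in $C_\delta\cap\Omega$ of the form $\{x_{n+1}\le h(x)\}$ where $h$ is a small, radially increasing function matching the datum $\psi\le0$ on $\partial\Omega\cap C_\delta$ and staying below $\e$ throughout $C_\delta$; the point is that the cusp of $\Omega$ "eats" enough of the negative part of the complement that the nonlocal curvature of such a barrier, computed at a would-be touching point, has the correct sign. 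Here the exponent condition $\beta\le s+1$ is exactly what makes the contribution of the missing wedge $\{0<x_n<\varphi(x')\}$ to the integral $\int |x-y|^{-(n+1+s)}\,dy$ non-negligible: the region has "thickness" $\sim|x'|^\beta$ at distance $|x'|$, and $\int_0^\rho r^{\beta}\,r^{n-2}\,r^{-(n+1+s)}\,dr$ diverges iff $\beta\le s+1$, producing an unbounded downward push on any piece of $\partial E$ trying to stay at height $\e$ near the origin.

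Assembling the proof, I would proceed in the following order. First, record the geometric consequence of \eqref{KASMFMA}, \eqref{CBET}, \eqref{INC:1}: inside $C_\rho$ the complement $\Omega^c$ contains the "cusp wedge" $W:=\{0\le x_n\le \varphi(x')\}\times\R\supseteq\{0\le x_n\le c|x'|^\beta\}\times\R$, and since $\psi\le0$ on $B_\rho$ by \eqref{INC:2}, the set $E_0^c$ contains $\{x_{n+1}>0\}\cap(B_\rho\times\R)$; hence $E^c$ is "large and below-heavy" near $0$. Second, fix $\e>0$ and choose the barrier $\{x_{n+1}\le h\}$ with $h$ supported in $B_\delta$, $0\le h\le\e$, radial, with a controlled modulus so that its nonlocal mean curvature on $\partial\{x_{n+1}=h\}$ is dominated by the divergent downward contribution of the cusp wedge; this is where the computation with $\beta\le s+1$ enters and must be done with care about the nonsymmetric part of the kernel integral. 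Third, invoke the comparison principle for $s$-minimal sets (available in the cylindrical setting from the definition via bounded subdomains $\Omega'\subset\subset\Omega$, together with \cite[Theorem 3.2]{MR2675483} for existence and standard sliding): since the barrier is a strict supersolution lying above $E_0$ outside $C_\delta$, $E$ cannot cross it, giving $E\cap C_\delta\subseteq\{x_{n+1}\le\e\}$. A minor bookkeeping point to handle along the way is the dependence of $\delta$ on $\e$: the barrier's height scales with its base radius through the cusp integral, so one solves for $\delta=\delta(\e,c,\beta,s,n)$ at the end.
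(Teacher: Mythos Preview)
Your diagnosis of the mechanism is right: the cusp region where $|x_n|<c|x'|^\beta$ contributes a divergent amount to the $s$-mean curvature integral exactly when $\beta\le s+1$. But your preferred route---a barrier $\{x_{n+1}\le h(x)\}$ with $h$ supported in $B_\delta$, followed by comparison/sliding in $C_\delta\cap\Omega$---has a real gap at the comparison step. Comparison in a domain $\Omega'$ requires the barrier to contain $E$ on \emph{all} of $(\Omega')^c$, not merely $E_0$ on $\Omega^c$. With $\Omega'=C_\delta\cap\Omega$ and $h$ supported in $B_\delta$, the barrier equals $\{x_{n+1}\le0\}$ on $\Omega\setminus C_\delta$, where $E$ is unknown, and on $\Omega^c\setminus B_\rho$, where $\psi$ is unconstrained. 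Sliding from above does not repair this: even granting an a priori height bound, the barrier must be a strict supersolution at every boundary point inside $\Omega'$, but the divergent cusp contribution is only available at the tip; at a generic point $(x,h(x))$ with $x\in\omega\cap B_\delta$ bounded away from $0$, the kernel is nonsingular on the wedge and the $s$-mean curvature of your barrier is finite and of uncontrolled sign. The first touching point of a slide can also land on $\partial C_\delta$, where no equation is available.

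The paper sidesteps all of this by working at a single point instead of building a barrier. It argues by contradiction, extracts a limit point $P=(0,\dots,0,P_{n+1})\in\partial E\cap\partial\Omega$ with $P_{n+1}\ge\e_0>0$, and exhibits a fixed exterior ball centered at $(0,\dots,0,-\mu,P_{n+1})$ (lying in $E^c$ because it sits in $\Omega^c$ at positive height, hence outside $E_0$) touching $\partial E$ at $P$. A short energy argument (Lemma~\ref{ATTSOL}) then gives the viscosity inequality $\int_{\R^{n+1}}(\chi_{E^c}-\chi_E)|X-P|^{-(n+1+s)}\,dX\le0$ even though $P\in\partial\Omega$. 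Since $\{x_n\le\varphi(x')\}\cap\mathcal{K}_\mu\subseteq E^c$ near $P$, symmetrizing in $x_n\mapsto-x_n$ reduces this to the finiteness of $\int_{\mathcal{K}_\mu\cap\{|x_n|<\varphi(x')\}}|X-P|^{-(n+1+s)}\,dX$, which is exactly your divergent cusp integral---contradiction. The divergence is thus used once, at $P$, with no comparison domain to match and no need for the barrier to be a supersolution anywhere else.
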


Some comments about Theorem~\ref{THM:CON} are in order.
Firstly,  conditions~\eqref{KASMFMA}, \eqref{CBET}, and~\eqref{INC:1} describe the geometry of the $n$-dimensional domain~$\omega$ (and therefore of the $(n+1)$-dimensional cylinder~$\Omega$): in 
a nutshell, these assumptions state that the origin belongs to the boundary of~$\omega$, that the domain is not better than~$C^{1,s}$ in the vicinity of the origin, and that the singularity points
``outward''.

Also, conditions~\eqref{INC:22} and~\eqref{INC:2} deal with the external datum
and basically say that this datum is below~$\{x_{n+1}=0\}$ in the vicinity of the origin.

The thesis obtained in~\eqref{CONT:1} thus controls the oscillations of the $s$-minimal set near the origin.

Obviously, up to reverting the vertical direction, the inequality signs in~\eqref{INC:22},
\eqref{INC:2} and~\eqref{CONT:1} can be reverted (with~$\e$ replaced by~$-\e$ in~\eqref{CONT:1}). Consequently, if~\eqref{INC:22}
and~\eqref{INC:2} are replaced by
\begin{equation}\begin{split}\label{BSODIKnC1}&E_0=\{x_{n+1}=\psi(x',x_n)\}\\ {\mbox{with}}\qquad&
\psi(x',x_n)=0 {\mbox{ for all }}(x',x_n)\in B_\rho,
\end{split}\end{equation}
then the thesis in~\eqref{CONT:1} can be strengthen into \begin{equation}\label{BSODIKnC2}
E\cap C_\delta\subseteq\{ |x_{n+1}|\le\e\},\end{equation} which can be seen as a continuity result.

In this spirit, we stress that boundary continuity for $s$-minimal sets is somewhat a ``rare'' phenomena
and typically jump discontinuities have to be expected, as established in~\cite{MR3596708, MR4104542, MR4178752, MR4392355, MR4548844}; see also~\cite{MR3982031, MR4294645} for several accurate numerical simulations that
showcase such discontinuities in this setting. Therefore, the continuity result provided by
Theorem~\ref{THM:CON} can be seen as an interesting counterpart of the more common boundary discontinuity: roughly speaking, this continuity is obtained thanks to domains which are ``not regular
enough'', with a direction of singularity making the long-range effects coming from the external data
by some means negligible with respect to the localized interaction reminding surface tension
(but of course some care is needed to make such a statement precise and quantitatively coherent).\medskip

As a byproduct of Theorem~\ref{THM:CON}, we can give a positive answer to the thought-provoking conjecture posed by J. P. Borthagaray, W. Li, and R. H. Nochetto (see~\cite[page~25]{MR4294645}),
who observed from numerical simulations in three-dimensional cylinders that jumpt discontinuity is ``absent at the convex corners of~$\Omega$''. We prove this conjecture as a direct consequence of Theorem~\ref{THM:CON}:

\begin{corollary}[Borthagaray-Li-Nochetto Conjecture]\label{THM:CON-cor}
Let~$\omega$ be a two-dimensional domain with a convex corner at the origin.

Let~$E_0$ be a smooth graph vanishing in a neighborhood of the origin
and let~$E$ be an $s$-minimal set in~$\Omega$ with~$E\setminus\Omega=E_0\setminus\Omega$.

Then, $E$ is continuous at the origin, in the sense that
for every~$\e>0$ there exists~$\delta>0$ such that
\begin{equation*}
E\cap C_\delta\subseteq\{ |x_{3}|\le\e\}.
\end{equation*}
\end{corollary}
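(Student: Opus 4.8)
The plan is to deduce Corollary~\ref{THM:CON-cor} directly from Theorem~\ref{THM:CON} by verifying that a two-dimensional domain with a convex corner at the origin fits the hypotheses of the theorem (in the special case $n=1$, so that $\omega\subseteq\R^2 = \R^{n}$, $x'\in\R^{n-1}=\R^0$ degenerates to a point, and $C_\delta$ lives in $\R^{3}$). First I would fix coordinates so that the convex corner at the origin is described, after a rotation, as the intersection $\omega\cap B_\rho = \{x_2 > \varphi(x_1)\}\cap B_\rho$ for a suitable $\varphi:\R\to[0,+\infty)$ with $\varphi(0)=0$. Here ``convex corner'' means the opening angle of $\omega$ at the origin is strictly less than $\pi$; after the rotation bisecting the corner, $\omega$ locally lies above the graph of $\varphi$, and $\varphi$ can be taken to be a convex, piecewise-linear function $\varphi(x_1) = \max\{a_+ x_1, -a_- x_1\}$ with slopes $a_\pm>0$ (more precisely $a_\pm = \cot(\theta_\pm)$ where $\theta_\pm\in(0,\pi/2]$ are the half-angles the two edges make with the bisector). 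The key numerical point is that such a $\varphi$ satisfies the lower bound \eqref{CBET}: since $a_\pm>0$ we have $\varphi(x_1)\ge (\min\{a_+,a_-\})\,|x_1|$ for all $x_1\in\R$, which is precisely \eqref{CBET} with exponent $\beta=1$ and constant $c=\min\{a_+,a_-\}>0$. Since $1\le s+1$ for every $s\in(0,1)$, the admissible range $\beta\in(0,s+1]$ in Theorem~\ref{THM:CON} is respected, so \eqref{KASMFMA}, \eqref{CBET}, \eqref{INC:1} all hold with this $\varphi$ and some $\rho>0$.

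Next I would handle the external datum. By hypothesis $E_0$ is a smooth graph vanishing in a neighborhood of the origin, i.e.\ $E_0 = \{x_3 = \psi(x_1,x_2)\}$ (or $E_0 = \{x_3 < \psi(x_1,x_2)\}$, depending on the graphicality convention) with $\psi$ smooth and $\psi\equiv 0$ on $B_\rho$ after possibly shrinking $\rho$; in particular $\psi\in L^\infty_{\rm loc}(\R^2)$. This is exactly the strengthened setting \eqref{BSODIKnC1}, so the remark following Theorem~\ref{THM:CON} applies: the conclusion \eqref{CONT:1} upgrades to the two-sided bound \eqref{BSODIKnC2}, namely for every $\e>0$ there is $\delta>0$ with $E\cap C_\delta\subseteq\{|x_3|\le\e\}$. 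This is verbatim the assertion of the corollary (with $x_{n+1}=x_3$), so the proof is complete once the geometric reduction above is recorded.

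The only genuine content to check carefully is the claim that a convex corner produces a defining function $\varphi$ obeying \eqref{CBET} with $\beta=1$ and that the right rotation makes $\omega$ locally a supergraph in the $x_2$-direction. I would spell this out: if the two edges emanating from the corner have inward-pointing directions $v_\pm$ with the angle between them less than $\pi$, choosing the $x_2$-axis along the internal bisector $v_++v_-$ and the $x_1$-axis orthogonal to it, each edge is a ray $\{(x_1,x_2): x_2 = a_\pm |x_1|,\ \pm x_1\ge 0\}$ with $a_\pm = \cot(\alpha_\pm) > 0$ where $\alpha_\pm\in(0,\pi/2)$ is the half-opening angle on each side; the region $\omega$ near the origin is then $\{x_2>\varphi(x_1)\}$ with $\varphi(x_1)=\max\{a_+x_1,\,-a_-x_1\}\ge c|x_1|$, $c=\min\{a_+,a_-\}>0$. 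I do not expect any real obstacle here — it is a routine trigonometric identification — but it is the one place where ``convex corner'' must be translated into the analytic hypotheses \eqref{KASMFMA}--\eqref{INC:1}, and where the borderline exponent $\beta=1\le s+1$ makes the theorem applicable. (One should also note in passing that a \emph{concave} corner has opening angle in $(\pi,2\pi)$, so that after the analogous rotation $\omega^c$ is a supergraph with $\beta=1$; this explains, via the discontinuity examples referenced in the introduction with $\alpha=1>s$, why stickiness is expected there — but this is not needed for the corollary itself.)
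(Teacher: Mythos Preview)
Your approach matches the paper's: rotate so the convex corner is locally $\{x_2>\varphi(x_1)\}$ with $\varphi(x_1)\ge c|x_1|$, invoke Theorem~\ref{THM:CON} with $\beta=1\le s+1$, and then use the two-sided upgrade \eqref{BSODIKnC1}--\eqref{BSODIKnC2} since $\psi\equiv0$ near the origin. The one slip is in your opening parenthetical: a two-dimensional $\omega$ corresponds to $n=2$ (the paper's standing assumption is $n\ge2$), so $x'=x_1\in\R^{n-1}=\R$ is one-dimensional --- which is exactly what you use in the body of the argument --- whereas with $n=1$ the variable $x'$ would live in $\R^0$, $\varphi$ would be a single number, and \eqref{CBET} would carry no information about the corner.
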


The result in Corollary~\ref{THM:CON-cor} is showcased\footnote{The figures of this paper
are just qualitative sketches, not numerical simulations, and have to be taken with a pinch of salt.} in Figure~\ref{FIG1}.

\begin{figure}[h]
\fbox{\includegraphics[height=5cm]{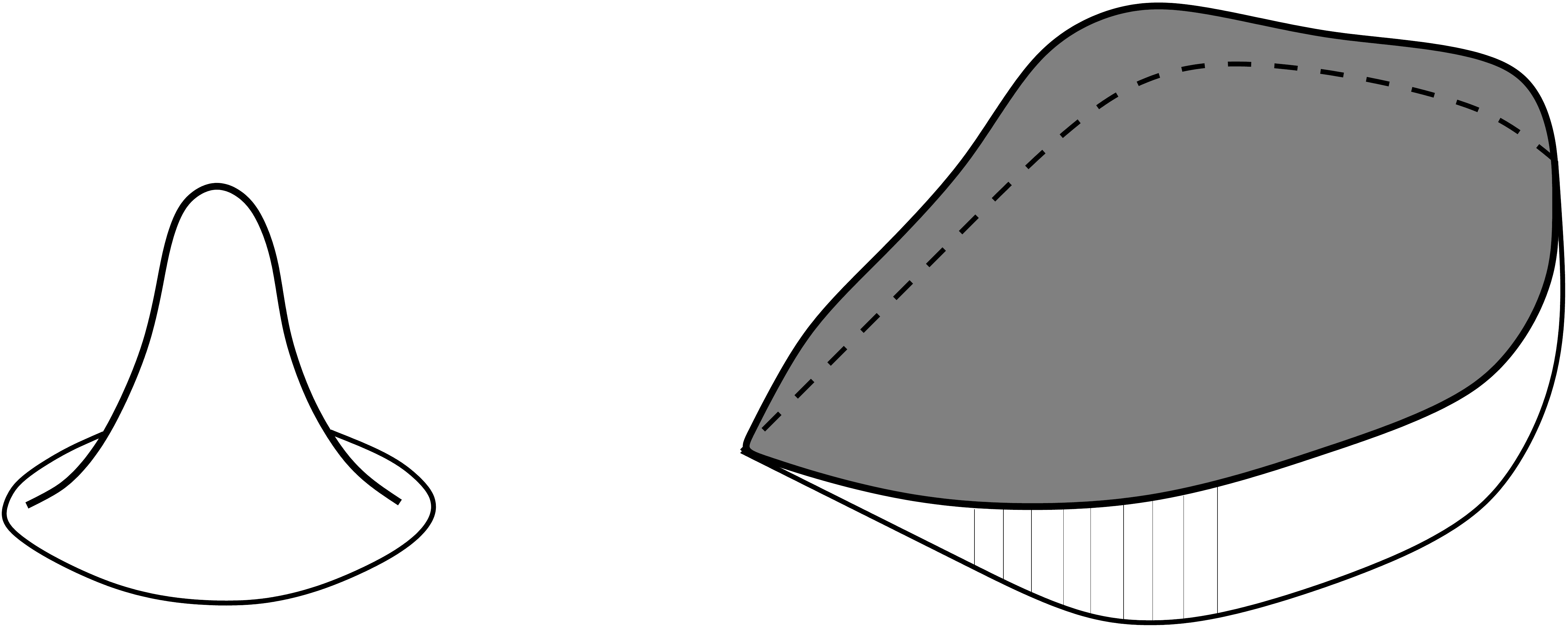}}
\caption{Boundary continuity for ``outward pointing'' corners.}\label{FIG1}
\end{figure}

We stress that the convexity assumption in Corollary~\ref{THM:CON-cor} cannot be removed. As mentioned in~\cite[page~25]{MR4294645}, numerical evidence suggests ``that stickiness is most pronounced at the reentrant corner''. The discontinuity of $s$-minimal sets at these concave corners is indeed a rather general phenomenon, and actually occurs even in the absence of corners, namely for smooth sets (and actually it suffices for~$\omega$ to possess an inner touching condition at the origin of class~$C^{1,\alpha}$ with~$\alpha>s$, which is obviously the case for concave angles
and which also shows the optimality of the exponent~$\beta$ in~\eqref{CBET}).
The precise result that we propose in this framework goes as follows:

\begin{theorem}[Boundary discontinuity for ``inward pointing'' domains]\label{AKgSMX:CONCA}
Assume that~$0\in \partial\omega$ and that there exists a bounded, $n$-dimensional set~$S$
with boundary of class~$C^{1,\alpha}$, for some~$\alpha\in(s,1)$, contained in~$\omega$ and such that~$0\in\partial S$.

Then, for every~$\epsilon>0$, there exist~$\delta>0$, $\psi\in C^\infty_0(\R^n\setminus\overline{\omega},[0,\e])$ and an $s$-minimal set~$E$ in~$\Omega$
such that
$$E\setminus\Omega=E_0\setminus\Omega,$$
with
\begin{equation*} E_0:=\{x_{n+1}<\psi(x',x_n)\},\end{equation*}
and
\begin{equation}\label{siCONT:1}
E\cap(S\times \R)\,\supseteq\,  (S\cap B_\delta)\times[0,\delta].
\end{equation}
\end{theorem}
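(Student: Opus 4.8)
The plan is to produce the desired sticky $s$-minimal set by a scaling argument that reduces the statement to a \emph{model} stickiness estimate for a half-space cylinder perturbed by a very flat ($C^{1,\alpha}$, with $\alpha>s$) obstacle; the strict inequality $\alpha>s$ is precisely what makes this perturbation negligible.

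First I would normalize the geometry. Since $0\in\partial S$ and $\partial S\in C^{1,\alpha}$, after a rotation there are $r_0,L>0$ such that
$$S_0:=\big\{x=(x',x_n)\in\R^{n-1}\times\R:\ |x'|<r_0,\ x_n>L|x'|^{1+\alpha}\big\}\subseteq S\subseteq\omega,\qquad 0\in\partial S_0.$$
Since $\omega$ is bounded I also fix a ball $D\Subset\R^n\setminus\overline\omega$ and pick $\psi\in C^\infty_0(\R^n\setminus\overline\omega,[0,\e])$ with $\psi\equiv\e$ on $\tfrac12D$, and set $E_0:=\{x_{n+1}<\psi(x',x_n)\}$. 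As $S\cap B_\delta\subseteq\omega\cap B_\delta$ for every $\delta>0$, it is enough to exhibit, for some small $\delta>0$, an $s$-minimal set $E$ in $\Omega$ with $E\setminus\Omega=E_0\setminus\Omega$ and $E\supseteq(\omega\cap B_\delta)\times[0,\delta]$. It is also convenient to record that, since $\psi\ge0$, by a standard comparison (the submodularity of $\Per_s$ together with the $s$-minimality of the half-space $\{x_{n+1}<0\}$) one may always assume such an $E$ contains $\{x_{n+1}<0\}$; this will be used inside the model estimate to locate the relevant portion of $\partial E$ above the level $\{x_{n+1}=0\}$.

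Next I would isolate the model estimate: in the half-space cylinder $\mathcal H:=\{x_n>0\}\times\R$, with datum $\{x_{n+1}<0\}$ perturbed by a far-away bump of height $h>0$, there is an $s$-minimal set containing a box $(\{x_n>0\}\cap B_{\rho})\times[0,\rho]$ for some $\rho=\rho(h)>0$; moreover this stickiness is \emph{quantitatively stable}, in that it persists if one shrinks $\mathcal H$ by an obstacle whose height is a sufficiently small fraction of $\rho$. This is the prototypical stickiness phenomenon and should follow either by the sliding method — moving a geometric sub-barrier for $E^c$ and excluding the first contact through the nonlocal Euler--Lagrange inequality, contact being impossible because the vertical wall that would have to be crossed is energetically cheap along the flat edge $\{x_n=0\}$ — or by a direct energy comparison between the attached and the detached competitor, the bump at positive height contributing a strictly favorable interaction while the cost of attaching is controlled by the flatness of $\partial\mathcal H$.

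Finally I would run the scaling. Dilating by $X\mapsto X/\lambda$ maps $\omega$ to $\omega_\lambda$, which on any fixed ball $B_R$ contains $\{x_n>L\lambda^{\alpha}|x'|^{1+\alpha}\}$ once $\lambda R<r_0$, hence differs from $\{x_n>0\}$ only by an obstacle of height $O(\lambda^{\alpha})$; by the $(n+1+s)$-homogeneity of the kernel, the rescaled datum is a bump at distance $\asymp1/\lambda$ whose pull near the origin is of size $\asymp\lambda^{s}$. The model estimate then yields, for a rescaled minimizer, stickiness up to some height $\rho_\lambda>0$, and the hypothesis $\alpha>s$ forces $\lambda^{\alpha}\ll\rho_\lambda$ for $\lambda$ small; thus the $O(\lambda^{\alpha})$ obstacle is negligible and, by the compactness of $s$-minimal sets together with the stability in the model estimate, a rescaled minimizer still contains $(\omega_\lambda\cap B_{\rho_\lambda})\times[0,\rho_\lambda]$. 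Undoing the dilation and setting $\delta:=\lambda\rho_\lambda$ gives an $s$-minimal set $E$ with datum $E_0$ and $E\supseteq(\omega\cap B_\delta)\times[0,\delta]\supseteq(S\cap B_\delta)\times[0,\delta]$, which is~\eqref{siCONT:1}. The expected main obstacle is the model estimate and, above all, its quantitative stability: making precise the competition between the pull ($\asymp\lambda^{s}$) and the obstacle ($\asymp\lambda^{\alpha}$) is exactly where $\alpha>s$ is used, in accordance with the fact that at the threshold exponent Theorem~\ref{THM:CON} gives the opposite, continuity conclusion; a secondary technical point is that the complement of $\omega$ near the origin is only Lipschitz and could be thin or irregular, so the blow-up limit need not be exactly $\{x_n>0\}$ but only a domain containing it with the origin on its boundary, a case which is no more constrained than the half-space and can be treated by the same barrier.
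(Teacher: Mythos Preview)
Your strategy is genuinely different from the paper's, and its central step---the ``model estimate'' together with its quantitative stability---is left as a black box that carries essentially all the weight. The paper proceeds instead by constructing an explicit sub-barrier: it takes $w\in C^{1,\alpha}(\R^n)$ adapted to $S$ with $|\nabla w(0)|\ge1$ (Lemma~\ref{LDE-31}), computes that the subgraph $W=\{x_{n+1}<\e w_+\}$ has $s$-mean curvature at most $C(\|w\|_{C^{1,\alpha}}\e)^{s/\alpha}$ on $(\partial W)\cap(S\times\R)$ (Lemma~\ref{LDE-32}; this is precisely where $\alpha>s$ enters, as the integrability condition making the curvature finite and small), and then adds a bump $\e^\gamma\tau$ with $\gamma\in(0,s/\alpha)$ supported in a ball disjoint from $\overline\omega$ (Corollary~\ref{CORBA}). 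The resulting $V=\{x_{n+1}<\e^\gamma\tau+\e w_+\}$ has strictly negative $s$-mean curvature on $(\partial V)\cap(S\times\R)$, so the comparison principle of \cite[Section~5]{MR2675483} gives $E\supseteq V$ directly. Only afterwards does the paper blow up at the origin and invoke the dichotomy of \cite[Theorem~4.1]{MR4104542} to upgrade $E\supseteq V$ (which already has a nontrivial vertical slope at $0$) to the full box inclusion~\eqref{siCONT:1}.

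The concrete gap in your argument is the unjustified assertion that $\lambda^\alpha\ll\rho_\lambda$. Your rescaling sends the bump to distance $\asymp1/\lambda$, so its pull tends to zero and one expects $\rho_\lambda\to0$ as well; to close the loop you need a \emph{quantitative lower bound} on $\rho_\lambda$ in terms of the pull $\asymp\lambda^s$, and you never establish one---you only say the model estimate ``should follow'' by sliding or energy comparison. In effect your model estimate with stability is equivalent in difficulty to the theorem itself, and the place where $\alpha>s$ is actually used remains hidden inside it. The paper's barrier makes this competition completely explicit: the $C^{1,\alpha}$ graph contributes curvature $O(\e^{s/\alpha})$ while the distant bump contributes $-c\e^\gamma$, and the choice $\gamma<s/\alpha$ (possible exactly because $s/\alpha<1$) settles the sign by a one-line inequality rather than a compactness argument.
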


Notice that~\eqref{siCONT:1} gives that the $s$-minimal set is discontinuous at the origin, presenting
a jump of at least~$\delta$. Interestingly, this discontinuity can be produced by an arbitrarily small perturbation (as encoded by the parameter~$\e$ in Theorem~\ref{AKgSMX:CONCA}).

\begin{figure}[h]
\fbox{\includegraphics[height=5cm]{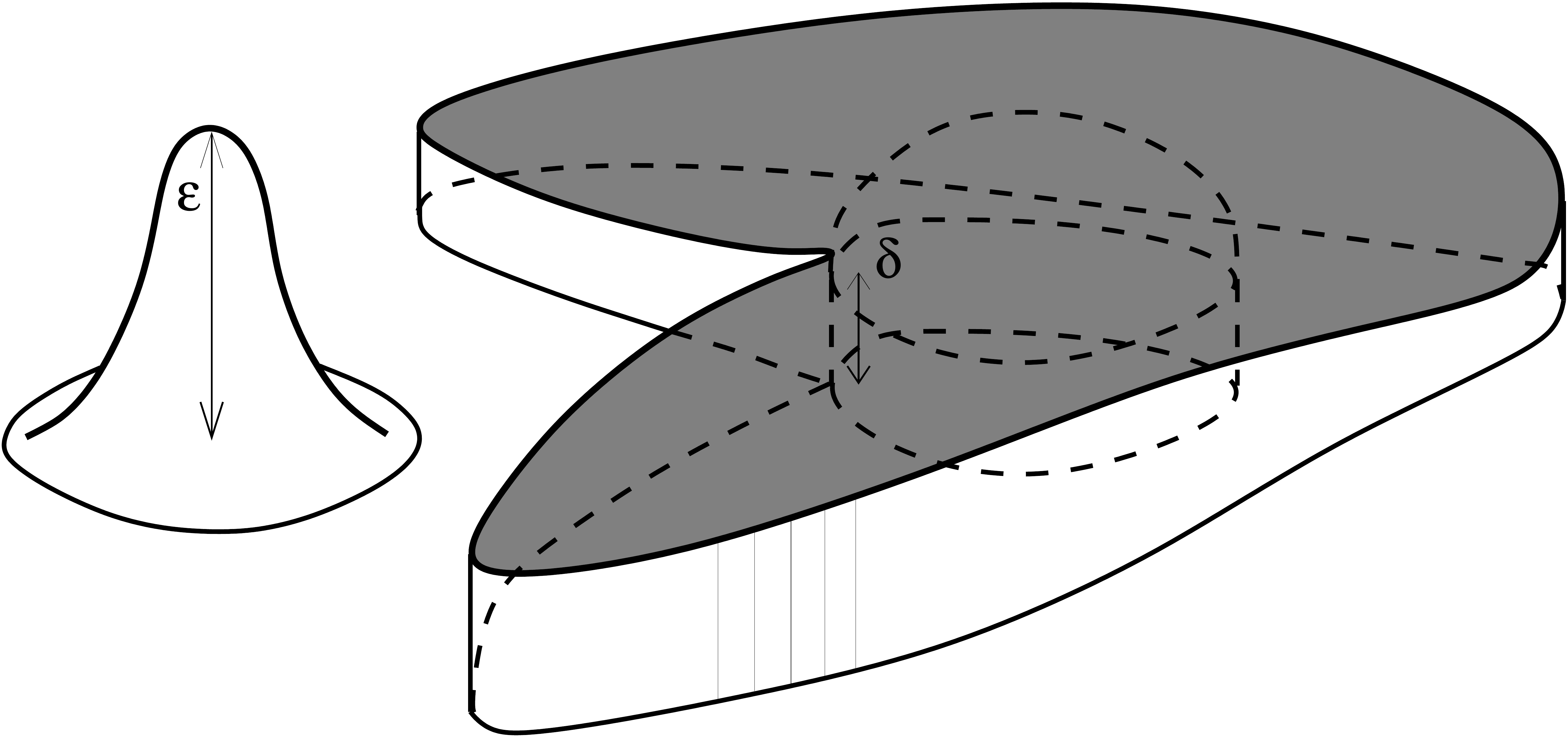}}
\caption{Boundary discontinuity for ``inward pointing'' domains. A perturbation of size~$\e$ producing a discontinuity of size~$\delta$.}\label{FIG2}
\end{figure}

The result corresponding to Theorem~\ref{AKgSMX:CONCA} is sketched in Figure~\ref{FIG2}.
\medskip

The rest of this paper is organized as follows.
In Section~\ref{apjsonxdkcbPOSDmsvd}, we present the proof of
Theorem~\ref{THM:CON}, from which one also deduces Corollary~\ref{THM:CON-cor}.
The proof of Theorem~\ref{AKgSMX:CONCA} is contained in Section~\ref{S2-30eM}.

\section{Proofs of Theorem~\ref{THM:CON} and Corollary~\ref{THM:CON-cor}}\label{apjsonxdkcbPOSDmsvd}

We start this section by noticing that $s$-minimal sets satisfy a suitable geometric equation
(in a suitable ``viscosity sense'') at domain boundary points.

\begin{lemma}\label{ATTSOL}
Let~$E$ be an $s$-minimal set in~$\Omega$.
Assume that~$P\in\partial E\cap\overline\Omega$.

Assume also that there exists an~$(n+1)$-dimensional ball~${\mathcal{B}}$ such that~${\mathcal{B}}\subseteq E^c$ and~$P\in\partial {\mathcal{B}}$.

Then,
\begin{equation}\label{FMCD}\int_{\R^{n+1}}\frac{\chi_{E^c}(X)-\chi_E(X)}{|X-P|^{n+1+s}}\,dX\le0,\end{equation}
where the integral\footnote{The integral on the left-hand side of~\eqref{FMCD}
is called in jargon the $s$-mean curvature of~$E$ at the point~$P$, \label{TSGDqw}
and it is often denoted by~${\mathcal{H}}^s_E(P)$.} is taken in the Cauchy Principal Value sense.
\end{lemma}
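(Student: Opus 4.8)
The plan is to compare $E$ with the competitor $E \setminus \mathcal{B}$ (or, more precisely, with $E \setminus \mathcal{B}_r$ for balls $\mathcal{B}_r$ slightly smaller than $\mathcal{B}$, touching $\partial E$ near $P$), exploit minimality to get a one-sided inequality on the perimeter difference, and then pass to the limit to recover the principal value integral. Since $\Omega$ may be unbounded (the cylindrical case), I would first fix a bounded open $\Omega' \Subset \Omega$ with $P \in \overline{\Omega'}$ — actually the delicate point here is that $P$ is allowed to lie on $\partial\Omega$, so $P$ need not be interior to any such $\Omega'$; I will address this below. Assuming for the moment the competitor argument goes through: for a ball $\mathcal{B}' \subseteq \mathcal{B}$ with $P$ near $\partial\mathcal{B}'$, minimality of $E$ against $F := E \setminus \mathcal{B}'$ gives $\Per_s(E,\Omega') \le \Per_s(F,\Omega')$, and a standard computation shows
\[
\Per_s(E,\Omega') - \Per_s(F,\Omega') = \int_{\mathcal{B}'} \Bigl( \int_{\R^{n+1}} \frac{\chi_{E^c}(Y) - \chi_E(Y)}{|X-Y|^{n+1+s}}\,dY \Bigr) dX + (\text{terms supported away from }\mathcal{B}'),
\]
so that the average of $\mathcal H^s_{E}$ over $\mathcal{B}'$ is $\le 0$.

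Next I would let $\mathcal{B}'$ shrink to a ball internally tangent to $\mathcal{B}$ at $P$ and pass to the limit. The point of using an interior-tangent ball $\mathcal{B}$ with $\mathcal{B} \subseteq E^c$ is exactly that it controls the otherwise-divergent part of the integrand: near $P$, the region $E$ is trapped outside $\mathcal{B}$, so the singular contribution from $E$ to $\mathcal H^s_E(P)$ is dominated by that of $\mathcal{B}^c$, which is finite in the principal value sense by symmetry of the tangent ball (the classical cancellation: $\int_{\mathcal{B}\triangle(\text{reflection})} |X-P|^{-(n+1+s)}\,dX < \infty$ when $\mathcal{B}$ is a ball with $P\in\partial\mathcal{B}$, since the non-cancelling region is a $C^{1,1}$ cusp). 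This is what makes $\mathcal H^s_E(P)$ well-defined as a principal value at the touching point $P$, and lets me take $\mathcal{B}' \uparrow \mathcal{B}$ via monotone/dominated convergence to conclude \eqref{FMCD}.

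The main obstacle — and the step I would spend the most care on — is precisely that $P$ may be a \emph{boundary} point of $\Omega$, not an interior one, so the naive "take $\Omega' \Subset \Omega$ containing $P$" does not work and the competitor $E \setminus \mathcal{B}'$ changes $E$ outside $\Omega$. The resolution is that the statement still makes sense because $\mathcal H^s_E(P)$ is a global quantity (the integral is over all of $\R^{n+1}$) and the external datum $E \setminus \Omega$ is \emph{fixed}: one works with balls $\mathcal{B}'$ that are small enough that $\mathcal{B}' \cap \Omega \ne \emptyset$ but, more to the point, one uses that $s$-minimality in $\Omega$ implies a minimality-type inequality that survives localization near $\partial\Omega$ — concretely, since $\mathcal{B} \subseteq E^c$ and $E\setminus\Omega$ is prescribed, removing $\mathcal{B}'\cap\Omega$ from $E$ is an admissible competitor for every $\Omega' \Subset \Omega$, and the resulting perimeter inequality, after letting $\Omega' \uparrow \Omega$, yields $\int_{\mathcal{B}'\cap\Omega}\mathcal H^s_E \le 0$ plus a boundary term from $\mathcal{B}' \setminus \Omega$ that one checks has a sign (or is handled by noting $\mathcal{B}'\setminus\Omega \subseteq \mathcal{B} \subseteq E^c$, so its contribution is consistent). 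Shrinking $\mathcal{B}'$ to the tangent ball at $P$ then gives the pointwise inequality \eqref{FMCD}. I would present this carefully, as it is the one genuinely non-routine point; everything else is the standard first-variation-of-$s$-perimeter bookkeeping, which I would only sketch.
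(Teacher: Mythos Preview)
Your proposal has a genuine gap at the very first step: the competitor is taken on the wrong side. You choose $\mathcal{B}' \subseteq \mathcal{B} \subseteq E^c$ and set $F := E \setminus \mathcal{B}'$, but then $F = E$, so the inequality $\Per_s(E,\Omega') \le \Per_s(F,\Omega')$ is vacuous and the ``standard computation'' you quote collapses to $0 = 0$, yielding no information about $\mathcal{H}^s_E$. (The displayed identity you write is what one obtains when the removed set is a \emph{subset of $E$}; this suggests you had the right formula in mind but paired it with the wrong set.) The exterior tangent ball $\mathcal{B}$ is not used to build the competitor; its role is only to guarantee that the principal value at $P$ is well-defined, via the cancellation you correctly describe, and to control the geometry when one shrinks subsets of $E$ toward $P$.

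The paper's proof is a two-line reduction to \cite[Theorem~5.1]{MR2675483}: for any $A \subseteq E \cap \Omega$ the competitor $E \setminus A$ is admissible (it agrees with $E$ outside $\Omega$, regardless of whether $P$ lies in $\Omega$ or on $\partial\Omega$), and minimality gives
\[
0 \ge \Per_s(E,\Omega) - \Per_s(E \setminus A,\Omega) = L_s(A,E^c) - L_s(A, E \setminus A).
\]
This is precisely the supersolution inequality that the Caffarelli--Roquejoffre--Savin argument needs; one then lets $A$ shrink to $P$ inside $E$ and uses the exterior ball to pass to the limit. In particular, the boundary case $P \in \partial\Omega$ that occupies most of your proposal requires no extra work: once the removed set is confined to $E \cap \Omega$, admissibility is automatic, and there is no need for a limit $\Omega' \uparrow \Omega$ or any sign analysis of boundary terms.
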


\begin{proof} Let~$A \subseteq E\cap \Omega$. Then, by minimality,
we have that
\begin{eqnarray*}
0\ge \Per_s(E,\Omega)-\Per_s(E\setminus A,\Omega)
= L_s(A,E^c)-L_s(A,E\setminus A).
\end{eqnarray*}
{F}rom this and~\cite[Theorem~5.1]{MR2675483} we obtain the desired result.
\end{proof}

Now we dive into the proof of Theorem~\ref{THM:CON}. The gist is that a discontinuity would produce
an $s$-minimal set with regularity no better than~$C^{1,s}$, thus producing an infinite $s$-mean curvature, in contradiction with the fact that $s$-minimal sets have vanishing $s$-mean curvature.
However, some care is needed to make such an argument rigorous, since the verification of the $s$-mean curvature equation at domain boundary points is a delicate issue (even more when the domain~$\Omega$ is not regular). The details go as follows:

\begin{proof}[Proof of Theorem~\ref{THM:CON}] Without loss of generality, in~\eqref{CBET} we can suppose that~$c\in(0,1)$.
Up to taking~$\rho$ smaller, we may also suppose that~$
\beta\ge1$. Therefore, if~$\mu\left(0,\frac18\right]$ and~$|y'|<\frac\mu2$,
\begin{equation}\label{XCKMx-10}
c|y'|^\beta \le |y'|^\beta\le|y'|<\frac{\mu}2
.\end{equation}

Now, to prove~\eqref{CONT:1} we argue by contradiction and suppose that
there exist~$\e_0>0$, an infinitesimal sequence~$\delta_k>0$ and points~$P_k=(p_k, p_{k,n+1})=(p_k',p_{k,n},p_{k,n+1})\in E$ with~$|p_k|<\delta_k$ and~$p_{k,n+1}>\e_0$. 
Actually, we can take~$P_k$ with the largest possible~$(n+1)$th coordinate,
namely renaming~$P_k=(p_k, p_{k,n+1})$ with~$p_{k,n+1}^\star=\sup\{t\in(\e_0,+\infty) {\mbox{ s.t. }} (p_k,t)\in E\}$. In this way, we can assume
that~$P_k\in\partial E$.

We take~$k$ sufficiently large such that~$\delta_k<\rho$. Hence, by~\eqref{INC:2}, we see that~$\psi(p'_k,p_{k,n})\le0$.
This and~\eqref{INC:22} entail that~$P_k\not\in E_0$ and therefore necessarily~$P_k\in\Omega$,
that is
\begin{equation}\label{Pkinom}p_k\in\omega.\end{equation}
Thus, from~\cite[Lemma~3.3]{MR3516886} we deduce that~$p_{k,n+1}$ is a bounded sequence.
Therefore, up to a subsequence, we can suppose that~$P_k\to P$ as~$k\to+\infty$,
for some~$P=(0,\dots,0,P_{n+1})\in\R^{n+1}$ with~$P_{n+1}\ge\e_0$.

We stress that the origin of~$\R^n$ belongs to~$\partial\omega$, due to~\eqref{KASMFMA}, therefore~$P\in\partial\Omega$.

We define
$$ \mu:=\frac{\min\left\{\e_0, \rho,1\right\}}8$$
and we claim that
\begin{equation}\label{CEM.a}
{\mathcal{B}}_\mu(0,\dots,0,-\mu,P_{n+1})\subseteq E^c .
\end{equation}
Indeed, if there were~$Q=(q',q_n,q_{n+1})\in E$ with~$|q'|^2+|q_n+\mu|^2+|q_{n+1}-P_{n+1}|^2<\mu^2$ then~$|q'|<\mu$ and~$|q_n+\mu|<\mu$. Therefore, $q_n\le|q_n+\mu|-\mu<0\le\varphi(q')$.
We also note that~$|(q',q_n)|\le|q'|+|q_n|\le3\mu<\rho$.

{F}rom these considerations and~\eqref{INC:1} we conclude that~$(q',q_n)\not\in\omega$, whence~$Q\not\in\Omega$.
Consequently, by~\eqref{INC:22} and~\eqref{INC:2}, we see that~$q_{n+1}\le\psi(q',q_n)\le0$.
For this reason,
$$ 8\mu\le\e_0+0\le P_{n+1}-q_{n+1}\le
|q_{n+1}-P_{n+1}|<\mu.
$$
This yields a contradiction, proving~\eqref{CEM.a}.

We also note that~$P\in\partial E$ and that~$P$
belongs to the boundary of the ball in~\eqref{CEM.a}.
Hence, by~\eqref{CEM.a} and Lemma~\ref{ATTSOL},
\begin{equation}\label{MAIJil2}
\int_{\R^{n+1}}\frac{\chi_{E^c}(X)-\chi_E(X)}{|X-P|^{n+1+s}}\,dX\le0.\end{equation}

Now we define
$$ {\mathcal{K}}_\mu:=
B_\mu\times (P_{n+1}-\mu,P_{n+1}+\mu)$$ and we
observe that
\begin{equation}\label{MAIJil}
{\mathcal{K}}_\mu\cap\{x_n\le\varphi(x')\}\,\subseteq\, E^c.
\end{equation}
Indeed, if~$\xi$ belongs to the set on the left-hand side of~\eqref{MAIJil}, we deduce from~\eqref{INC:1} that~$\xi\in\Omega^c$ and therefore the claim reduces to checking that
\begin{equation}\label{INC:1CT}
\xi\in E_0^c.\end{equation}
Since, by~\eqref{INC:2},
$$ \xi_{n+1}\ge P_{n+1}-\mu\ge\e_0 -\frac{\e_0}8>0\ge\psi(\xi',\xi_n),
$$
the claim in~\eqref{INC:1CT} follows from~\eqref{INC:22}. The proof of~\eqref{MAIJil} is thereby complete.

{F}rom~\eqref{MAIJil2} and~\eqref{MAIJil} we infer that
\begin{equation} \label{SAH-01}\begin{split}
0&\ge
\int_{{\mathcal{K}}_\mu}\frac{\chi_{E^c}(X)-\chi_E(X)}{|X-P|^{n+1+s}}\,dX
+
\int_{\R^{n+1}\setminus {\mathcal{K}}_\mu}\frac{\chi_{E^c}(X)-\chi_E(X)}{|X-P|^{n+1+s}}\,dX
\\&\ge
\int_{{\mathcal{K}}_\mu\cap\{x_n\le\varphi(x')\}}\frac{dX}{|X-P|^{n+1+s}}-
\int_{{\mathcal{K}}_\mu\cap\{x_n>\varphi(x')\}}\frac{dX}{|X-P|^{n+1+s}}
-\int_{\R^{n+1}\setminus {\mathcal{K}}_\mu}\frac{dX}{|X-P|^{n+1+s}}.
\end{split}\end{equation}
We also remark that
\begin{equation} \label{SAH-02} \int_{\R^{n+1}\setminus {\mathcal{K}}_\mu}\frac{dX}{|X-P|^{n+1+s}}\le\frac{C}{\mu^{s}},\end{equation}
for some~$C>0$ depending only on~$n$ and~$s$.

Besides, substituting for~$(y',y_n,y_{n+1}):=
(x',-x_n,x_{n+1})$, and noticing that~$P_n=0$,
\begin{eqnarray*}
\int_{{\mathcal{K}}_\mu\cap\{x_n\le\varphi(x')\}}\frac{dX}{|X-P|^{n+1+s}}=
\int_{{\mathcal{K}}_\mu\cap\{y_n\ge-\varphi(y')\}}\frac{dY}{|Y-P|^{n+1+s}},
\end{eqnarray*}
giving that
\begin{eqnarray*}&&
\int_{{\mathcal{K}}_\mu\cap\{x_n\le\varphi(x')\}}\frac{dX}{|X-P|^{n+1+s}}-
\int_{{\mathcal{K}}_\mu\cap\{x_n>\varphi(x')\}}\frac{dX}{|X-P|^{n+1+s}}\\&=&
\int_{{\mathcal{K}}_\mu\cap\{x_n\ge-\varphi(x')\}}\frac{dX}{|X-P|^{n+1+s}}
-
\int_{{\mathcal{K}}_\mu\cap\{x_n>\varphi(x')\}}\frac{dX}{|X-P|^{n+1+s}}\\&=&
\int_{{\mathcal{K}}_\mu\cap\{|x_n|<\varphi(x')\}}\frac{dX}{|X-P|^{n+1+s}}.
\end{eqnarray*}

We thereby combine this information with~\eqref{SAH-01} and~\eqref{SAH-02} to conclude that
\begin{equation}\label{PAJOSLXN-0i2u3r9iyhg}
\frac{C}{\mu^{s}}\ge\int_{{\mathcal{K}}_\mu\cap\{|x_n|<\varphi(x')\}}\frac{dX}{|X-P|^{n+1+s}}.
\end{equation}

Now we use the short notation~$\mu_k:=\frac\mu{2^k}$.
By~\eqref{CBET} and~\eqref{XCKMx-10}, up to renaming~$C$ line after line, we have that
\begin{eqnarray*}&&
\int_{{\mathcal{K}}_\mu\cap\{|x_n|<\varphi(x')\}}\frac{dX}{|X-P|^{n+1+s}}\\&\ge&
\int_{\{|x'|<\mu/2\}\times\{|x_n|<\min\{c|x'|^\beta,\mu/2\}\}\times\{|x_{n+1}-P_{n+1}|<\mu\}}\frac{dX}{|X-P|^{n+1+s}}\\&=&\int_{\{|y'|<\mu/2\}\times\{|y_n|<\min\{c|y'|^\beta,\mu/2\}\}\times\{|y_{n+1}|<\mu\}}\frac{dY}{|Y|^{n+1+s}}
\\&\ge&
\int_{\{|y'|<\mu/2\}\times\{|y_n|< c|y'|^\beta\}\times\{|y_{n+1}|<\mu\}}\frac{dY}{|Y|^{n+1+s}}
\\&\ge&\sum_{k=1}^{+\infty}\int_{\{\mu/2^{k+1}<|y'|<\mu/2^k\}\times\{c|y'|^\beta/2<|y_n|<c|y'|^\beta\}\times\{\mu/2^{k+1}<|y_{n+1}|<\mu/2^k\}}\frac{dY}{|Y|^{n+1+s}}
\\&\ge&\sum_{k=1}^{+\infty}\frac1C\int_{\{\mu_k/2<|y'|<\mu_k\}\times\{c|y'|^\beta/2<|y_n|<c|y'|^\beta\}\times\{\mu_k/2<|y_{n+1}|<\mu_k\}}\frac{dY}{\big( \mu_k^2+c^2|y'|^{2\beta}\big)^{\frac{n+1+s}2}}
\\&\ge&\sum_{k=1}^{+\infty}\frac{c\mu_k}C\int_{\{\mu_k/2<|y'|<\mu_k\} }\frac{|y'|^\beta\,dy'}{\big( \mu_k^2+c^2|y'|^{2\beta}\big)^{\frac{n+1+s}2}}\\&\ge&\sum_{k=1}^{+\infty}
\frac{c\mu_k^{\beta+n}}{C\big( \mu_k^2+c^2\mu^{2\beta}_k\big)^{\frac{n+1+s}2}}\\&=&\sum_{k=1}^{+\infty}
\frac{c\mu_k^{\beta-1-s}}{C\big( 1+c^2\mu_k^{2(\beta-1)}\big)^{\frac{n+1+s}2}}\\
\\&\ge&\sum_{k=1}^{+\infty}
\frac{c\mu^{\beta-1-s} \;2^{k(1+s-\beta)}}{C(1+c^2)^{\frac{n+1+s}2}}.
\end{eqnarray*}
The latter is a divergent series, since~$\beta\le s+1$. But this is in contradiction with~\eqref{PAJOSLXN-0i2u3r9iyhg} and, as a result of this, the proof of Theorem~\ref{THM:CON} is complete.\end{proof}

\begin{proof}[Proof of Corollary~\ref{THM:CON-cor}] Up to a rotation, we can describe
the convex corner at the origin by writing~$\omega$
in the form~$x_n>c|x'|$, for some~$c>0$. This gives that the setting
in~\eqref{KASMFMA}, \eqref{CBET},
and~\eqref{INC:1} is satisfied, with~$n:=2$ and~$\beta:=1$.

We are therefore in the framework of~\eqref{BSODIKnC1}, whence the desired result follows from~\eqref{BSODIKnC2}.
\end{proof}

\section{Proof of Theorem~\ref{AKgSMX:CONCA}}\label{S2-30eM}

The argument presented here will rely on a convenient barrier. To construct it, we start with some preliminary computations.

\begin{lemma}\label{LDE-31}
Consider a bounded, $n$-dimensional set~$S$
with boundary of class~$C^{1,\alpha}$, for some~$\alpha\in(0,1)$, with~$0\in\partial S$.

There exists~$w\in C^{1,\alpha}(\R^n)$ such that~$w\ge0$ in~$S$, $w\le0$ in~$\R^n\setminus S$, and
\begin{equation}\label{IFDSP} \liminf_{ x\to 0}|\nabla w(x)|\ge1.\end{equation}
\end{lemma}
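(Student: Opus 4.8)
The plan is to build $w$ explicitly from the local graph description of $\partial S$ at the origin, and then patch it with a cutoff so that it extends to all of $\R^n$ with the prescribed sign.

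First I would invoke that $\partial S$ is of class $C^{1,\alpha}$ and $0\in\partial S$: there exist $r>0$ (I write $B_r$ for the ball of radius $r$ in $\R^n$), a rotation of coordinates, and a function $g\in C^{1,\alpha}(\R^{n-1})$ with $g(0)=0$ such that, in the new coordinates,
$$S\cap B_r=\{x_n>g(x')\}\cap B_r .$$
Shrinking $r$ if necessary, $g$ is defined and of class $C^{1,\alpha}$ on the projection of $B_r$, so $v(x):=x_n-g(x')$ is of class $C^{1,\alpha}$ on $B_r$; it is positive on $S\cap B_r$ and, since $(\R^n\setminus S)\cap B_r\subseteq\{x_n\le g(x')\}$, it is nonpositive on $(\R^n\setminus S)\cap B_r$; moreover $\nabla v(x)=(-\nabla g(x'),1)$, whence $|\nabla v(x)|^2=1+|\nabla g(x')|^2\ge1$ throughout $B_r$.

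Next I would localize: pick $\eta\in C^\infty_0(B_r)$ with $0\le\eta\le1$ and $\eta\equiv1$ on $B_{r/2}$, set $w:=\eta\,v$ on $B_r$ and $w:=0$ on $\R^n\setminus B_r$, and finally undo the rotation. Since $\eta$ is compactly supported in $B_r$, the function $\eta\,v$ vanishes near $\partial B_r$, so the two pieces glue into a function of class $C^{1,\alpha}(\R^n)$. The sign is then read off directly: on $S\cap B_r$ one has $\eta\ge0$ and $v>0$, hence $w\ge0$; on $(\R^n\setminus S)\cap B_r$ one has $v\le0$, hence $w\le0$; and $w\equiv0$ off $B_r$, which is at once $\ge0$ and $\le0$. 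Thus $w\ge0$ on all of $S$ and $w\le0$ on all of $\R^n\setminus S$. Finally $w=v$ on $B_{r/2}$, so $|\nabla w(x)|=|\nabla v(x)|\ge1$ for every $x\in B_{r/2}$, which gives~\eqref{IFDSP}; undoing the rotation changes neither the sign of $w$ nor the value of $|\nabla w|$ nor the limit $x\to 0$.

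The computation is routine; the two points deserving a little attention are that the glued function is genuinely $C^{1,\alpha}$ across $\partial B_r$ — which is precisely why I take $\eta$ compactly supported inside $B_r$, rather than merely equal to $1$ near the origin — and that the sign conditions must be verified on all of $\R^n$ and not only on the local patch $B_r$, which works because the choice $w\equiv0$ away from the origin is compatible with both inequalities. One could instead cut off the signed distance function to $\partial S$, but its $C^{1,\alpha}$ regularity for $\alpha<1$ is a somewhat delicate classical fact, so the graph construction above seems cleaner and entirely self-contained.
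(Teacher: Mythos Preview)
Your proof is correct and is essentially the same as the paper's: both rotate so that $S\cap B_\rho=\{x_n>\phi(x')\}\cap B_\rho$, set $w(x)=(x_n-\phi(x'))\,\tau(x)$ with a cutoff $\tau\in C^\infty_0(B_\rho,[0,1])$ equal to $1$ near the origin, and read off the sign and gradient conditions. Your write-up is in fact more explicit than the paper's about the sign verification and the $C^{1,\alpha}$ gluing, but the construction is identical.
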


\begin{proof} Up to a rotation, we can assume that~$S\cap B_\rho=\{x_n>\phi(x')\}\cap B_\rho$ for some~$\rho>0$ and~$\phi\in C^{1,\alpha}(\R^{n-1})$. Let~$\tau\in C^\infty_0(B_\rho,[0,1])$ with~$\tau=1$ in~$B_{\rho/2}$ and
$$ w(x):=\big( x_n-\phi(x')\big)\, \tau(x).$$
Notice that
$$ \nabla w(x)=\big( -\nabla_{x'}\phi(x'),1\big)\, \tau(x)+\big( x_n-\phi(x')\big)\,\nabla \tau(x),$$
and therefore
$$ \lim_{ x\to 0}\nabla w(x)=\big( -\nabla_{x'}\phi(0),1\big),$$
from which the desired result follows.
\end{proof}

\begin{lemma}\label{LDE-32}
Let~$\e>0$ and~$\alpha\in(s,1)$. Let also~$S$ be an open subset of~$\R^n$.

Let~$w\in C^{1,\alpha}(\R^n)$ such that~$w\ge0$ in~$S$ and~$w\le0$ in~$\R^n\setminus S$.
Let also~$w_+(x):=\max\{w(x),0\}$ and~$W:=\{x_{n+1}<\e w_+(x)\}$.

Then, the $s$-mean curvature of~$W$ at every point on~$(\partial W)\cap (S\times\R)$ is bounded from above
by
$$C\big(\|w\|_{C^{1,\alpha}(\R^n)}\,\e\big)^{\frac{s}\alpha},$$
where~$C>0$ depends only on~$n$ and~$s$.
\end{lemma}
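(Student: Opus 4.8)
The plan is to estimate the $s$-mean curvature $\mathcal{H}^s_W(P)$ directly from its integral definition at a point $P=(x_0,\e w_+(x_0))\in(\partial W)\cap(S\times\R)$, exploiting that $w\in C^{1,\alpha}$ with $\alpha>s$ forces $W$ to be trapped, near $P$, between two translates of the subgraph of a smooth function. Concretely, since $w\in C^{1,\alpha}(\R^n)$, for $x$ in a neighborhood of $x_0$ we have the two-sided bound
\[
\big|\e w_+(x)-\e w_+(x_0)-\e\nabla w_+(x_0)\cdot(x-x_0)\big|\le \e\,\|w\|_{C^{1,\alpha}(\R^n)}\,|x-x_0|^{1+\alpha},
\]
(where at points of $S$ the gradient of $w_+$ is that of $w$; one checks the inequality remains valid using $w\le 0$ outside $S$). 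Write $L:=\e\|w\|_{C^{1,\alpha}(\R^n)}$. Up to an affine change of the vertical coordinate (which does not change the $s$-mean curvature) we may assume $x_0=0$, $w_+(x_0)=0$, $\nabla w_+(x_0)=0$, so that near $P=0$ the set $W$ lies between $\{x_{n+1}<-L|x|^{1+\alpha}\}$ and $\{x_{n+1}<L|x|^{1+\alpha}\}$.

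Next I would split the principal-value integral
\[
\mathcal{H}^s_W(0)=\int_{\R^{n+1}}\frac{\chi_{W^c}(X)-\chi_W(X)}{|X|^{n+1+s}}\,dX
\]
into the ball $\mathcal B_r$ and its complement, for a radius $r$ (depending on the $C^{1,\alpha}$-neighborhood above) to be optimized at the end. On $\R^{n+1}\setminus\mathcal B_r$ the integrand is bounded in absolute value by $|X|^{-(n+1+s)}$, contributing at most $C r^{-s}$. Inside $\mathcal B_r$, the only contribution to the principal value comes from the symmetric difference of $W$ with the half-space $\{x_{n+1}<0\}$ (whose $s$-mean curvature at $0$ vanishes by odd symmetry), and by the trapping above this symmetric difference is contained in $\{|x_{n+1}|<L|x|^{1+\alpha}\}\cap\mathcal B_r$. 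Hence
\[
\big|\mathcal{H}^s_W(0)\big|\le \frac{C}{r^{s}}+\int_{\{|x_{n+1}|<L|x|^{1+\alpha}\}\cap\mathcal B_r}\frac{dX}{|X|^{n+1+s}}.
\]
Using polar coordinates in the $x$-variable and integrating out $x_{n+1}$ over an interval of length $2L|x|^{1+\alpha}$ against $|x|^{-(n+1+s)}$, the inner integral is controlled (since $\alpha>s$, so the exponent $1+\alpha-1-s=\alpha-s>0$ makes it convergent at the origin) by $C L\, r^{\alpha-s}$. Therefore $|\mathcal{H}^s_W(0)|\le C(r^{-s}+L r^{\alpha-s})$, and choosing $r\sim L^{-1/\alpha}$ balances the two terms and yields $|\mathcal{H}^s_W(0)|\le C L^{s/\alpha}=C(\|w\|_{C^{1,\alpha}(\R^n)}\e)^{s/\alpha}$, as claimed; one just has to check that this choice of $r$ is admissible, i.e. smaller than the fixed neighborhood where the $C^{1,\alpha}$ expansion holds, which is automatic once $\e$ (hence $L$) is small, and for $\e$ bounded below the bound is trivial after enlarging $C$.

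The main obstacle I anticipate is the rigorous handling of the principal value and the reduction to the half-space: one must be careful that $0\in\partial W$ genuinely, that the expansion of $w_+$ at a boundary point of $S$ behaves well (the gradient of $w$ at such a point is tangent to $\partial S$, so $\nabla w_+$ is well defined there and the quadratic-type remainder estimate is the same), and that subtracting the (vanishing) half-space curvature is legitimate inside the principal value — this is exactly the standard device of comparing with a $C^{1,1}$ barrier, and the fact that here we only have $C^{1,\alpha}$ with $\alpha<1$ is precisely what produces the nontrivial exponent $s/\alpha$ rather than a bound linear in $\e$. A secondary technical point is making the cancellation on $\mathcal B_r\setminus\{|x_{n+1}|<L|x|^{1+\alpha}\}$ precise: there the integrand of $\chi_{W^c}-\chi_W$ agrees with that of $\chi_{\{x_{n+1}>0\}}-\chi_{\{x_{n+1}<0\}}$, whose $\mathcal B_r$-integral is zero by symmetry, so only the thin region and the exterior survive.
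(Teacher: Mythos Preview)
Your overall strategy --- compare with the tangent half-space, control the symmetric difference by the $C^{1,\alpha}$ remainder, split at scale $r$ and optimize --- is the same as the paper's, but two of your technical steps are wrong as written.

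First, the ``affine change of the vertical coordinate'' you invoke to set $\nabla w_+(x_0)=0$ is a shear $(x,x_{n+1})\mapsto(x,x_{n+1}-a\cdot x)$, and this does \emph{not} preserve the $s$-mean curvature, since the kernel $|X|^{-(n+1+s)}$ is not invariant under shears. The fix is simply not to normalize: compare $W$ directly with the \emph{tilted} tangent half-space $H=\{x_{n+1}<\e w(x_0)+\e\nabla w(x_0)\cdot(x-x_0)\}$. The integral of $\chi_{H^c}-\chi_H$ over $\mathcal B_r(P)$ still vanishes by odd symmetry about $P$, and since trivially $|X-P|\ge|x-x_0|$, the thin-region estimate $\int_{(H\setminus W)\cap\mathcal B_r}|X-P|^{-(n+1+s)}\,dX\le C L\,r^{\alpha-s}$ goes through with constants depending only on $n,s$.

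Second, the two-sided bound $|\e w_+(x)-\e w_+(x_0)-\e\nabla w_+(x_0)\cdot(x-x_0)|\le L|x-x_0|^{1+\alpha}$ can fail for $x\notin S$, because $w_+$ is not $C^{1,\alpha}$ across $\{w=0\}$. For the \emph{upper} bound on $\mathcal H^s_W$ you only need to control $H\setminus W$, i.e.\ the one-sided inequality $w(x_0)+\nabla w(x_0)\cdot(x-x_0)-w_+(x)\le\|w\|_{C^{1,\alpha}}|x-x_0|^{1+\alpha}$, and this does follow from $w_+\ge w$ together with the $C^{1,\alpha}$ expansion of $w$ (your parenthetical remark). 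So drop the two-sided trapping claim and keep only the inclusion $H\setminus W\subseteq\{0\le x_{n+1}-\e w_+(x)\le L|x-x_0|^{1+\alpha}\}$.

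For comparison, the paper avoids both issues by working with the graph representation of the $s$-mean curvature: writing it as $\int_{\R^n}F\big(\e(w_+(x)-w_+(x-y))/|y|\big)\,|y|^{-n-s}\,dy$ with $F(t)=\int_0^t(1+\tau^2)^{-(n+1+s)/2}\,d\tau$, it replaces $w_+$ by $w$ via monotonicity of $F$ and $w_+\ge w$, subtracts $F(\e\nabla w(x)\cdot y/|y|)$ (whose integral vanishes by oddness of $F$), and bounds the difference using $|F'|\le1$ and the $C^{1,\alpha}$ Taylor remainder of $w$. This reduces everything to an $n$-dimensional integral and makes the dependence of the constant on $n,s$ alone immediate; your $(n{+}1)$-dimensional argument reaches the same bound once the two fixes above are made.
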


\begin{proof} Let~$X=(x,x_{n+1})\in(\partial W)\cap (S\times\R)$. Then, $x\in S$ and~$x_{n+1}=\e w_+(x)=\e w(x)$.
In this way, by~\cite[equation~(49)]{MR3331523}, up to normalizing constant, the $s$-mean curvature of~$W$ at~$X$ is equal to
\begin{equation}\label{OTDG-00}
\int_{\R^n} F\left(\frac{\e\big(w_+(x)-w_+(x-y)\big)}{|y|}\right)\,\frac{dy}{|y|^{n+s}}=\int_{\R^n} F\left(\frac{\e\big(w(x)-w_+(x-y)\big)}{|y|}\right)\,\frac{dy}{|y|^{n+s}},\end{equation}
where
$$ F(t):=\int_0^t\frac{d\tau}{(1+\tau^2)^{\frac{n+1+s}2}}.$$

Since~$F$ is monotone and~$w_+(x-y)\ge w(x-y)$, we have that
$$ F\left(\frac{\e\big(w(x)-w_+(x-y)\big)}{|y|}\right)\le
F\left(\frac{\e\big(w(x)-w(x-y)\big)}{|y|}\right)$$
and accordingly the quantity in~\eqref{OTDG-00} is bounded from above by
\begin{equation}\label{OTDG-0}\int_{\R^n} F\left(\frac{\e\big(w(x)-w(x-y)\big)}{|y|}\right)\,\frac{dy}{|y|^{n+s}}.\end{equation}

We also remark that~$F$ is odd and therefore
$$ \int_{\R^n} F\left(\frac{\e\nabla w(x)\cdot y}{|y|}\right)\,\frac{dy}{|y|^{n+s}}=0,$$
hence we can rewrite~\eqref{OTDG-0} in the form
\begin{equation}\label{OTDG-1}
\int_{\R^n} \left[F\left(\frac{\e\big(w(x)-w(x-y)\big)}{|y|}\right)-F\left(\frac{\e\nabla w(x)\cdot y}{|y|}\right)\right]\,\frac{dy}{|y|^{n+s}}.\end{equation}

Now we observe that
\begin{eqnarray*}&&F\left(\frac{\e\big(w(x)-w(x-y)\big)}{|y|}\right)-F\left(\frac{\e\nabla w(x)\cdot y}{|y|}\right)\\&=&
\int_0^1 F'\left(\frac{\e}{|y|}\Big((1-t)\big(w(x)-w(x-y)\big)+t\nabla w(x)\Big)\right)\,dt\;
\frac{\e}{|y|}\Big( w(x)-w(x-y)-\nabla w(x)\cdot y\Big)\\& \le& \frac{C\e}{|y|}\Big| w(x)-w(x-y)-\nabla w(x)\cdot y\Big|\\& \le& \frac{C\e}{|y|}\left| \int_0^1 \nabla w(x-\theta y)\cdot y\,d\theta-\nabla w(x)\cdot y\right|\\& \le& C\e \int_0^1 \big|\nabla w(x-\theta y)-\nabla w(x)\big|\,d\theta\\&\le& C\|w\|_{C^{1,\alpha}(\R^n)}\,\e\, |y|^\alpha
\end{eqnarray*}
and therefore, since~$\alpha>s$, for all~$R>0$,
\begin{equation}\label{olw-pdijfec}
\int_{B_R} \left[F\left(\frac{\e\big(w(x)-w(x-y)\big)}{|y|}\right)-F\left(\frac{\e\nabla w(x)\cdot y}{|y|}\right)\right]\,\frac{dy}{|y|^{n+s}}\le C\|w\|_{C^{1,\alpha}(\R^n)}\,\e R^{\alpha-s} .\end{equation}

Besides, since~$F$ is bounded,
$$ \int_{\R^n\setminus B_R} \left[F\left(\frac{\e\big(w(x)-w(x-y)\big)}{|y|}\right)-F\left(\frac{\e\nabla w(x)\cdot y}{|y|}\right)\right]\,\frac{dy}{|y|^{n+s}}\le\frac{C}{R^s}.$$

This and~\eqref{olw-pdijfec} give that the quantity in~\eqref{OTDG-1} is bounded from above by~$C\|w\|_{C^{1,\alpha}(\R^n)}\,\e R^{\alpha-s}+\frac{C}{R^s}$.

It is now convenient to choose
$$R:=\frac1{\big(\|w\|_{C^{1,\alpha}(\R^n)}\,\e\big)^{\frac1\alpha}}$$
to obtain the desired result.
\end{proof}

For us, Lemmata~\ref{LDE-31} and~\ref{LDE-32} come in handy to construct a useful barrier, see Figure~\ref{FIGAGG7}:

\begin{figure}[h]
\fbox{\includegraphics[height=4.5cm]{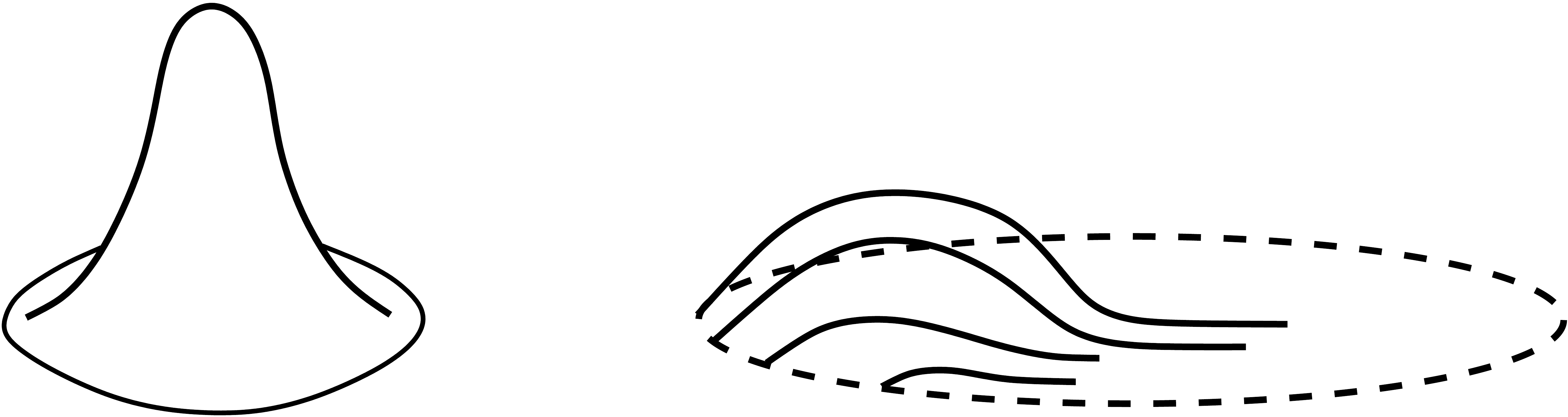}}
\caption{The barrier $v$ in Corollary~\ref{CORBA}.}\label{FIGAGG7}
\end{figure}

\begin{corollary}\label{CORBA}
Consider a bounded, $n$-dimensional set~$S$
with boundary of class~$C^{1,\alpha}$, for some~$\alpha\in(s,1)$, with~$0\in\partial S$.

Let also~$\omega$ be a bounded, open set in~$\R^n$ such that~$\omega\supseteq S$.

Then, for any~$\e>0$ small enough, there exists~$v\in C^{0,1}_0(\R^n,[0,\e])$ with~$v=0$ in~$\varpi\setminus\omega$ for some open set~$\varpi$ with~$\omega\Subset\varpi$,
such that, setting~$V:=\{x_{n+1}<v(x)\}$, we have that the $s$-mean curvature of~$(\partial V)\cap(S\times\R)$ is strictly negative
and
\begin{equation}\label{K12SM3D-w2ep3roefp0}  \liminf_{ S\ni x\to 0}|\nabla v(x)|\ne0.\end{equation}
\end{corollary}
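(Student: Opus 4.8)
The plan is to build $v$ by combining the function $w$ furnished by Lemma~\ref{LDE-31} with a rescaling and a cutoff, and then invoke Lemma~\ref{LDE-32} to control the $s$-mean curvature of the subgraph along $S\times\R$. First I would apply Lemma~\ref{LDE-31} to the set $S$ to obtain $w\in C^{1,\alpha}(\R^n)$ with $w\ge0$ in $S$, $w\le0$ in $\R^n\setminus S$, and $\liminf_{x\to0}|\nabla w(x)|\ge1$. Since $S\Subset\omega$ and $S$ is bounded, after multiplying $w$ by a small constant we may assume $\|w\|_{C^{1,\alpha}(\R^n)}$ is as small as we like; in particular we may assume $w_+\le 1$ everywhere, so that $\e w_+$ takes values in $[0,\e]$.

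The next step is to truncate so that the resulting function has compact support and vanishes on a neighbourhood of $\R^n\setminus\omega$. Because $w\le0$ on $\R^n\setminus S$ and $S\Subset\omega$, the function $w_+$ already vanishes on $\R^n\setminus S$, hence in particular on a neighbourhood of $\R^n\setminus\omega$; so if $w$ itself has compact support (which it does by construction in Lemma~\ref{LDE-31}, being multiplied there by $\tau\in C^\infty_0$) then $v:=\e w_+$ is already supported in $\overline S\subset\omega$. Thus I would simply set
\begin{equation*}
v:=\e\, w_+, \qquad V:=\{x_{n+1}<v(x)\},
\end{equation*}
choose $\varpi$ to be any bounded open set with $\omega\Subset\varpi$, and note $v\equiv0$ on $\varpi\setminus\omega$ automatically. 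Lipschitz regularity of $v$ follows since $w\in C^{1,\alpha}\subset C^{0,1}_{\rm loc}$ and $w$ has compact support, so $v\in C^{0,1}_0(\R^n,[0,\e])$. The lower bound \eqref{K12SM3D-w2ep3roefp0} follows from \eqref{IFDSP}: for $x\in S$ close to $0$ we have $w(x)\ge0$, and in fact $w>0$ on a punctured neighbourhood of $0$ inside $S$ by the gradient lower bound, so $v=\e w$ there and $|\nabla v(x)|=\e|\nabla w(x)|\to$ something $\ge\e>0$, whence the liminf is nonzero.

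Finally, for the curvature estimate I would apply Lemma~\ref{LDE-32} with this $w$ (scaled small) and this $\e$: it gives that the $s$-mean curvature of $W=V$ at every point of $(\partial V)\cap(S\times\R)$ is bounded above by $C(\|w\|_{C^{1,\alpha}(\R^n)}\,\e)^{s/\alpha}$. This upper bound is nonnegative, not strictly negative, so the one genuinely delicate point — and the main obstacle — is upgrading ``bounded above by a small positive quantity'' to ``strictly negative''. The fix is to subtract a small positive constant before truncating, or more cleanly to work with a tilted/shifted barrier: replace $w$ by $w-\eta\zeta$ for a suitable fixed bump $\zeta\ge0$ supported where we need the curvature to be negative, or observe that along $(\partial V)\cap(S\times\R)$ one can instead compare with the subgraph of $\e w_+ - \delta_0$ for a small $\delta_0>0$, which at points with $x_{n+1}=\e w(x)-\delta_0$ strictly decreases the contribution of the far field. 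Concretely I would redo the computation in Lemma~\ref{LDE-32} keeping track of the sign: the integrand splits as the ``linearized'' part (which integrates to $0$ by oddness of $F$) plus an error of size $O(\|w\|_{C^{1,\alpha}}\e R^{\alpha-s})$ on $B_R$ plus a tail of size $O(R^{-s})$; choosing first $\e$ small and then exploiting that a downward vertical shift by $\delta_0$ contributes a strictly negative term of order $-c\,\delta_0 R^{-s}$ (since below the point there is more of $V$ than above), one gets strict negativity for $\delta_0$ fixed and $\e$ small enough. I would then check that after this shift $v$ still maps into $[0,\e]$ (replace $\e$ by $\e/2$ at the outset and absorb the shift), still vanishes near $\partial\omega$, and still satisfies \eqref{K12SM3D-w2ep3roefp0} near the origin, where $w$ is bounded below away from $0$ so the shift does not kill positivity of the gradient. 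This completes the construction.
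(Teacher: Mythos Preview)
Your setup through the application of Lemma~\ref{LDE-32} is correct, and you correctly identify the genuine obstacle: Lemma~\ref{LDE-32} only gives a nonnegative upper bound $C(\|w\|_{C^{1,\alpha}}\e)^{s/\alpha}$, not strict negativity. However, your proposed fixes do not work.

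The ``downward shift'' idea fails precisely at the origin. Since $0\in\partial S$, the construction in Lemma~\ref{LDE-31} gives $w(0)=0$; hence for any $\delta_0>0$ the truncated function $\max\{\e w_+-\delta_0,0\}$ vanishes identically on a full neighbourhood of $0$ in $S$, so its gradient there is zero and \eqref{K12SM3D-w2ep3roefp0} is destroyed. Your claim that ``$w$ is bounded below away from $0$'' near the origin is simply false. The ``subtract a bump $\eta\zeta$ from $w$'' idea is the wrong direction: to push the $s$-mean curvature down you must \emph{enlarge} $V$ (add mass to the $\chi_E$ side of the integrand), not shrink it; and if $\zeta$ is supported outside $S$, where $w\le0$ already, then $(w-\eta\zeta)_+=w_+$ is unchanged.

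The paper's device is different and clean: pick a ball $B_1(p_0)$ with $\overline{B_1(p_0)}\cap\overline\omega=\varnothing$, take $\tau\in C^\infty_0(B_1(p_0),[0,1])$ with $\tau=1$ on $B_{1/2}(p_0)$, and set
\[
v:=\e^\gamma\tau+\e w_+,\qquad \gamma\in\Big(0,\tfrac{s}{\alpha}\Big).
\]
Then $V\supseteq W$ and $V\setminus W\supseteq B_{1/2}(p_0)\times[0,\e^\gamma)$, a block at fixed positive distance from $S\times\R$. For $P\in(\partial V)\cap(S\times\R)=(\partial W)\cap(S\times\R)$ one gets
\[
\mathcal{H}^s_V(P)\le \mathcal{H}^s_W(P)-\int_{B_{1/2}(p_0)\times[0,\e^\gamma)}\frac{dX}{|X-P|^{n+1+s}}\le C\e^{s/\alpha}-c\,\e^\gamma<0
\]
for $\e$ small, since $\gamma<s/\alpha$. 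Because $\tau$ is supported away from $\overline\omega$, one still has $v=0$ on a collar $\varpi\setminus\omega$, and since $\tau$ and $w_+$ have disjoint supports the gradient of $v$ near $0$ in $S$ is just $\e\nabla w$, preserving \eqref{K12SM3D-w2ep3roefp0}. This ``add mass far away'' trick is the missing idea in your argument.
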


\begin{proof} We pick a ball~$B_1(p_0)$ in~$\R^n$ such that~$\overline{B_1(p_0)}\cap\overline\omega=\varnothing$.
We take~$\tau\in C^\infty_0(B_1(p_0),[0,1])$ such that~$\tau=1$ in~$B_{1/2}(p_0)$.
Let also~$w$ be as in Lemma~\ref{LDE-31} and correspondingly we let~$W$ be as in Lemma~\ref{LDE-32}.

We
define
$$ v:=\e^\gamma\tau+ \e w_+ ,$$
with~$\gamma\in\left(0,\frac{s}\alpha\right)$.

We remark that~$v\ge\e w_+$ and consequently~$V\supseteq W$, giving that~$V^c\setminus W^c=\varnothing$.

Moreover, since~$w_+$ and~$\tau$ have disjoint supports,
\begin{eqnarray*}&& V\setminus W =\big\{ v(x)> x_{n+1}\ge \e w_+(x)\big\}=
\big\{ \e^\gamma\tau(x)+ \e w_+(x)> x_{n+1}\ge\e w_+(x)\big\}
\\&&\qquad=
\big\{ \e^\gamma\tau(x)> x_{n+1}-\e w_+(x)\ge0\big\}
\supseteq B_{1/2}(p_0)\times [0,\e^\gamma).
\end{eqnarray*}
For that reason, using the notation in footnote~\ref{TSGDqw}, if~$P\in(\partial V)\cap(S\times\R)$, owing to Lemma~\ref{LDE-32}, we find that
\begin{eqnarray*}
{\mathcal{H}}^s_V(P)&=&{\mathcal{H}}^s_W(P)+
\int_{\R^n}\frac{\chi_{V^c\setminus W^c}(X)-\chi_{V\setminus W}(X)}{|X-P|^{n+1+s}}\,dX\\&\le& C\big(\|w\|_{C^{1,\alpha}(\R^n)}\,\e\big)^{\frac{s}\alpha}-\int_{B_{1/2}(p_0)\times [0,\e^\gamma)}\frac{dX}{|X-P|^{n+1+s}}\\&\le& C\big(\|w\|_{C^{1,\alpha}(\R^n)}\,\e\big)^{\frac{s}\alpha}-c\e^\gamma.
\end{eqnarray*}
This quantity is negative when~$\e$ is small enough, as desired.

In addition, by~\eqref{IFDSP} and the fact that~$w\ge0$ in~$S$,
$$ \liminf_{ S\ni x\to 0}|\nabla v(x)|=\e\liminf_{S\ni x\to 0}|\nabla w_+(x)|=\e\liminf_{S\ni x\to 0}|\nabla w(x)|\ge\e\ne0.$$

Notice also that~$v$ is bounded by a power of~$\e$, hence the desired claim follows up to renaming~$\e$.
\end{proof}

We are now in the position of completing the proof of  Theorem~\ref{AKgSMX:CONCA} by combining
the barrier constructed in Corollary~\ref{CORBA} and a careful blow-up method\footnote{For simplicity of notation, some of the arguments were presented in~\cite{MR4104542} for the two-dimensional case, but, as remarked at the beginning of Section~2 there, the $n$-dimensional analysis
would have remained unaltered.} introduced in~\cite{MR4104542}.

\begin{proof}[Proof of Theorem~\ref{AKgSMX:CONCA}] Up to a rotation, we assume that, near the origin, the inward touching domain~$S$ is the superlevel set of a function of class~$C^{1,\alpha}$ in the $n$th Cartesian coordinate.

If~$v$ and~$V$ are as in Corollary~\ref{CORBA}, we can take~$\psi\in C^\infty_0(\R^n,[0,\e])$ such that~$\psi\ge v$ in~$\R^n\setminus\omega$ and~$\psi=0$ in~$\omega$.

By the comparison principle in~\cite[Section~5]{MR2675483}, we have that
\begin{equation}\label{K12SM3D-w2ep3roef}
E\supseteq V.\end{equation}

Now we perform a blow-up at the origin, using~\cite[Lemmata~2.2 and~2.3]{MR4104542},
obtaining in this way an $s$-minimal cone~$E_{00}$ in (up to a rotation) $\{x_n>0\}$.
By~\eqref{K12SM3D-w2ep3roefp0} and~\eqref{K12SM3D-w2ep3roef}, it follows that~$E_{00}$ presents a corner at the origin
with a nontrivial slope in the vertical direction.

By~\cite[Theorem 4.1]{MR4104542}, we obtain that there exists~$\delta>0$ such that either
\begin{equation}\label{EUIJSNTH-1}
E\cap {\mathcal{B}}_\delta\cap(S\times\R)=\varnothing.
\end{equation}
or
\begin{equation}\label{EUIJSNTH-2}
E\cap {\mathcal{B}}_\delta\cap(S\times\R)={\mathcal{B}}_\delta\cap(S\times\R).
\end{equation}
But~\eqref{EUIJSNTH-1} cannot hold true, in light of~\eqref{K12SM3D-w2ep3roef}, hence necessarily~\eqref{EUIJSNTH-2} is satisfied, which gives the desired result in~\eqref{siCONT:1}, up to renaming~$\delta$.
\end{proof}

\begin{bibdiv}
\begin{biblist}

\bib{MR1372427}{article}{
   author={Alberti, G.},
   author={Bellettini, G.},
   author={Cassandro, M.},
   author={Presutti, E.},
   title={Surface tension in Ising systems with Kac potentials},
   journal={J. Statist. Phys.},
   volume={82},
   date={1996},
   number={3-4},
   pages={743--796},
   issn={0022-4715},
   review={\MR{1372427}},
   doi={10.1007/BF02179792},
}

\bib{MR4548844}{article}{
   author={Baronowitz, Benjamin},
   author={Dipierro, Serena},
   author={Valdinoci, Enrico},
   title={The stickiness property for antisymmetric nonlocal minimal graphs},
   journal={Discrete Contin. Dyn. Syst.},
   volume={43},
   date={2023},
   number={3-4},
   pages={1006--1025},
   issn={1078-0947},
   review={\MR{4548844}},
   doi={10.3934/dcds.2022103},
}

\bib{MR3331523}{article}{
   author={Barrios, Bego\~{n}a},
   author={Figalli, Alessio},
   author={Valdinoci, Enrico},
   title={Bootstrap regularity for integro-differential operators and its
   application to nonlocal minimal surfaces},
   journal={Ann. Sc. Norm. Super. Pisa Cl. Sci. (5)},
   volume={13},
   date={2014},
   number={3},
   pages={609--639},
   issn={0391-173X},
   review={\MR{3331523}},
}

\bib{MR3982031}{article}{
   author={Borthagaray, Juan Pablo},
   author={Li, Wenbo},
   author={Nochetto, Ricardo H.},
   title={Finite element discretizations of nonlocal minimal graphs:
   convergence},
   journal={Nonlinear Anal.},
   volume={189},
   date={2019},
   pages={111566, 31},
   issn={0362-546X},
   review={\MR{3982031}},
   doi={10.1016/j.na.2019.06.025},
}

\bib{MR4294645}{article}{
   author={Borthagaray, Juan Pablo},
   author={Li, Wenbo},
   author={Nochetto, Ricardo H.},
   title={Finite element algorithms for nonlocal minimal graphs},
   journal={Math. Eng.},
   volume={4},
   date={2022},
   number={2},
   pages={Paper No. 016, 29},
   review={\MR{4294645}},
   doi={10.3934/mine.2022016},
}

\bib{MR1945278}{article}{
   author={Bourgain, Jean},
   author={Brezis, Ha\"{\i}m},
   author={Mironescu, Petru},
   title={Limiting embedding theorems for $W^{s,p}$ when $s\uparrow1$ and
   applications},
   note={Dedicated to the memory of Thomas H. Wolff},
   journal={J. Anal. Math.},
   volume={87},
   date={2002},
   pages={77--101},
   issn={0021-7670},
   review={\MR{1945278}},
   doi={10.1007/BF02868470},
}

\bib{MR3264796}{article}{
   author={Brasco, L.},
   author={Lindgren, E.},
   author={Parini, E.},
   title={The fractional Cheeger problem},
   journal={Interfaces Free Bound.},
   volume={16},
   date={2014},
   number={3},
   pages={419--458},
   issn={1463-9963},
   review={\MR{3264796}},
   doi={10.4171/IFB/325},
}

\bib{MR4184583}{article}{
   author={Bucur, Claudia},
   author={Dipierro, Serena},
   author={Lombardini, Luca},
   author={Valdinoci, Enrico},
   title={Minimisers of a fractional seminorm and nonlocal minimal surfaces},
   journal={Interfaces Free Bound.},
   volume={22},
   date={2020},
   number={4},
   pages={465--504},
   issn={1463-9963},
   review={\MR{4184583}},
   doi={10.4171/ifb/447},
}

\bib{MR3926519}{article}{
   author={Bucur, Claudia},
   author={Lombardini, Luca},
   author={Valdinoci, Enrico},
   title={Complete stickiness of nonlocal minimal surfaces for small values
   of the fractional parameter},
   journal={Ann. Inst. H. Poincar\'{e} C Anal. Non Lin\'{e}aire},
   volume={36},
   date={2019},
   number={3},
   pages={655--703},
   issn={0294-1449},
   review={\MR{3926519}},
   doi={10.1016/j.anihpc.2018.08.003},
}

\bib{MR4116635}{article}{
   author={Cabr\'{e}, Xavier},
   author={Cinti, Eleonora},
   author={Serra, Joaquim},
   title={Stable $s$-minimal cones in $\Bbb{R}^3$ are flat for $s\sim 1$},
   journal={J. Reine Angew. Math.},
   volume={764},
   date={2020},
   pages={157--180},
   issn={0075-4102},
   review={\MR{4116635}},
   doi={10.1515/crelle-2019-0005},
}

\bib{MR3934589}{article}{
   author={Cabr\'{e}, Xavier},
   author={Cozzi, Matteo},
   title={A gradient estimate for nonlocal minimal graphs},
   journal={Duke Math. J.},
   volume={168},
   date={2019},
   number={5},
   pages={775--848},
   issn={0012-7094},
   review={\MR{3934589}},
   doi={10.1215/00127094-2018-0052},
}

\bib{MR3881478}{article}{
   author={Cabr\'{e}, Xavier},
   author={Fall, Mouhamed Moustapha},
   author={Sol\`a-Morales, Joan},
   author={Weth, Tobias},
   title={Curves and surfaces with constant nonlocal mean curvature: meeting
   Alexandrov and Delaunay},
   journal={J. Reine Angew. Math.},
   volume={745},
   date={2018},
   pages={253--280},
   issn={0075-4102},
   review={\MR{3881478}},
   doi={10.1515/crelle-2015-0117},
}

\bib{MR3744919}{article}{
   author={Cabr\'{e}, Xavier},
   author={Fall, Mouhamed Moustapha},
   author={Weth, Tobias},
   title={Delaunay hypersurfaces with constant nonlocal mean curvature},
   language={English, with English and French summaries},
   journal={J. Math. Pures Appl. (9)},
   volume={110},
   date={2018},
   pages={32--70},
   issn={0021-7824},
   review={\MR{3744919}},
   doi={10.1016/j.matpur.2017.07.005},
}

\bib{MR3770173}{article}{
   author={Cabr\'{e}, Xavier},
   author={Fall, Mouhamed Moustapha},
   author={Weth, Tobias},
   title={Near-sphere lattices with constant nonlocal mean curvature},
   journal={Math. Ann.},
   volume={370},
   date={2018},
   number={3-4},
   pages={1513--1569},
   issn={0025-5831},
   review={\MR{3770173}},
   doi={10.1007/s00208-017-1559-6},
}

\bib{MR3532394}{article}{
   author={Caffarelli, L.},
   author={De Silva, D.},
   author={Savin, O.},
   title={Obstacle-type problems for minimal surfaces},
   journal={Comm. Partial Differential Equations},
   volume={41},
   date={2016},
   number={8},
   pages={1303--1323},
   issn={0360-5302},
   review={\MR{3532394}},
   doi={10.1080/03605302.2016.1192646},
}

\bib{MR2675483}{article}{
   author={Caffarelli, L.},
   author={Roquejoffre, J.-M.},
   author={Savin, O.},
   title={Nonlocal minimal surfaces},
   journal={Comm. Pure Appl. Math.},
   volume={63},
   date={2010},
   number={9},
   pages={1111--1144},
   issn={0010-3640},
   review={\MR{2675483}},
   doi={10.1002/cpa.20331},
}

\bib{MR2564467}{article}{
   author={Caffarelli, Luis A.},
   author={Souganidis, Panagiotis E.},
   title={Convergence of nonlocal threshold dynamics approximations to front
   propagation},
   journal={Arch. Ration. Mech. Anal.},
   volume={195},
   date={2010},
   number={1},
   pages={1--23},
   issn={0003-9527},
   review={\MR{2564467}},
   doi={10.1007/s00205-008-0181-x},
}

\bib{MR3107529}{article}{
   author={Caffarelli, Luis},
   author={Valdinoci, Enrico},
   title={Regularity properties of nonlocal minimal surfaces via limiting
   arguments},
   journal={Adv. Math.},
   volume={248},
   date={2013},
   pages={843--871},
   issn={0001-8708},
   review={\MR{3107529}},
   doi={10.1016/j.aim.2013.08.007},
}

\bib{MR4000255}{article}{
   author={Cesaroni, Annalisa},
   author={Dipierro, Serena},
   author={Novaga, Matteo},
   author={Valdinoci, Enrico},
   title={Fattening and nonfattening phenomena for planar nonlocal curvature
   flows},
   journal={Math. Ann.},
   volume={375},
   date={2019},
   number={1-2},
   pages={687--736},
   issn={0025-5831},
   review={\MR{4000255}},
   doi={10.1007/s00208-018-1793-6},
}

\bib{MR3401008}{article}{
   author={Chambolle, Antonin},
   author={Morini, Massimiliano},
   author={Ponsiglione, Marcello},
   title={Nonlocal curvature flows},
   journal={Arch. Ration. Mech. Anal.},
   volume={218},
   date={2015},
   number={3},
   pages={1263--1329},
   issn={0003-9527},
   review={\MR{3401008}},
   doi={10.1007/s00205-015-0880-z},
}

\bib{MR3713894}{article}{
   author={Chambolle, Antonin},
   author={Novaga, Matteo},
   author={Ruffini, Berardo},
   title={Some results on anisotropic fractional mean curvature flows},
   journal={Interfaces Free Bound.},
   volume={19},
   date={2017},
   number={3},
   pages={393--415},
   issn={1463-9963},
   review={\MR{3713894}},
   doi={10.4171/IFB/387},
}

\bib{MR3981295}{article}{
   author={Cinti, Eleonora},
   author={Serra, Joaquim},
   author={Valdinoci, Enrico},
   title={Quantitative flatness results and $BV$-estimates for stable
   nonlocal minimal surfaces},
   journal={J. Differential Geom.},
   volume={112},
   date={2019},
   number={3},
   pages={447--504},
   issn={0022-040X},
   review={\MR{3981295}},
   doi={10.4310/jdg/1563242471},
}

\bib{MR3778164}{article}{
   author={Cinti, Eleonora},
   author={Sinestrari, Carlo},
   author={Valdinoci, Enrico},
   title={Neckpinch singularities in fractional mean curvature flows},
   journal={Proc. Amer. Math. Soc.},
   volume={146},
   date={2018},
   number={6},
   pages={2637--2646},
   issn={0002-9939},
   review={\MR{3778164}},
   doi={10.1090/proc/14002},
}

\bib{MR4175821}{article}{
   author={Cinti, Eleonora},
   author={Sinestrari, Carlo},
   author={Valdinoci, Enrico},
   title={Convex sets evolving by volume-preserving fractional mean
   curvature flows},
   journal={Anal. PDE},
   volume={13},
   date={2020},
   number={7},
   pages={2149--2171},
   issn={2157-5045},
   review={\MR{4175821}},
   doi={10.2140/apde.2020.13.2149},
}

\bib{MR3836150}{article}{
   author={Ciraolo, Giulio},
   author={Figalli, Alessio},
   author={Maggi, Francesco},
   author={Novaga, Matteo},
   title={Rigidity and sharp stability estimates for hypersurfaces with
   constant and almost-constant nonlocal mean curvature},
   journal={J. Reine Angew. Math.},
   volume={741},
   date={2018},
   pages={275--294},
   issn={0075-4102},
   review={\MR{3836150}},
   doi={10.1515/crelle-2015-0088},
}

\bib{MR4279395}{article}{
   author={Cozzi, Matteo},
   author={Lombardini, Luca},
   title={On nonlocal minimal graphs},
   journal={Calc. Var. Partial Differential Equations},
   volume={60},
   date={2021},
   number={4},
   pages={Paper No. 136, 72},
   issn={0944-2669},
   review={\MR{4279395}},
   doi={10.1007/s00526-021-02002-9},
}

\bib{MR3485130}{article}{
   author={D\'{a}vila, Juan},
   author={del Pino, Manuel},
   author={Dipierro, Serena},
   author={Valdinoci, Enrico},
   title={Nonlocal Delaunay surfaces},
   journal={Nonlinear Anal.},
   volume={137},
   date={2016},
   pages={357--380},
   issn={0362-546X},
   review={\MR{3485130}},
   doi={10.1016/j.na.2015.10.009},
}

\bib{MR3798717}{article}{
   author={D\'{a}vila, Juan},
   author={del Pino, Manuel},
   author={Wei, Juncheng},
   title={Nonlocal $s$-minimal surfaces and Lawson cones},
   journal={J. Differential Geom.},
   volume={109},
   date={2018},
   number={1},
   pages={111--175},
   issn={0022-040X},
   review={\MR{3798717}},
   doi={10.4310/jdg/1525399218},
}

\bib{MR3412379}{article}{
   author={Di Castro, Agnese},
   author={Novaga, Matteo},
   author={Ruffini, Berardo},
   author={Valdinoci, Enrico},
   title={Nonlocal quantitative isoperimetric inequalities},
   journal={Calc. Var. Partial Differential Equations},
   volume={54},
   date={2015},
   number={3},
   pages={2421--2464},
   issn={0944-2669},
   review={\MR{3412379}},
   doi={10.1007/s00526-015-0870-x},
}

\bib{MR4404780}{article}{
   author={Dipierro, Serena},
   author={Maggi, Francesco},
   author={Valdinoci, Enrico},
   title={Minimizing cones for fractional capillarity problems},
   journal={Rev. Mat. Iberoam.},
   volume={38},
   date={2022},
   number={2},
   pages={635--658},
   issn={0213-2230},
   review={\MR{4404780}},
   doi={10.4171/rmi/1289},
}

\bib{MR4392355}{article}{
   author={Dipierro, Serena},
   author={Onoue, Fumihiko},
   author={Valdinoci, Enrico},
   title={(Dis)connectedness of nonlocal minimal surfaces in a cylinder and
   a stickiness property},
   journal={Proc. Amer. Math. Soc.},
   volume={150},
   date={2022},
   number={5},
   pages={2223--2237},
   issn={0002-9939},
   review={\MR{4392355}},
   doi={10.1090/proc/15796},
}

\bib{MR3516886}{article}{
   author={Dipierro, Serena},
   author={Savin, Ovidiu},
   author={Valdinoci, Enrico},
   title={Graph properties for nonlocal minimal surfaces},
   journal={Calc. Var. Partial Differential Equations},
   volume={55},
   date={2016},
   number={4},
   pages={Art. 86, 25},
   issn={0944-2669},
   review={\MR{3516886}},
   doi={10.1007/s00526-016-1020-9},
}

\bib{MR3596708}{article}{
   author={Dipierro, Serena},
   author={Savin, Ovidiu},
   author={Valdinoci, Enrico},
   title={Boundary behavior of nonlocal minimal surfaces},
   journal={J. Funct. Anal.},
   volume={272},
   date={2017},
   number={5},
   pages={1791--1851},
   issn={0022-1236},
   review={\MR{3596708}},
   doi={10.1016/j.jfa.2016.11.016},
}

\bib{MR4104542}{article}{
   author={Dipierro, Serena},
   author={Savin, Ovidiu},
   author={Valdinoci, Enrico},
   title={Nonlocal minimal graphs in the plane are generically sticky},
   journal={Comm. Math. Phys.},
   volume={376},
   date={2020},
   number={3},
   pages={2005--2063},
   issn={0010-3616},
   review={\MR{4104542}},
   doi={10.1007/s00220-020-03771-8},
}

\bib{MR4178752}{article}{
   author={Dipierro, Serena},
   author={Savin, Ovidiu},
   author={Valdinoci, Enrico},
   title={Boundary properties of fractional objects: flexibility of linear
   equations and rigidity of minimal graphs},
   journal={J. Reine Angew. Math.},
   volume={769},
   date={2020},
   pages={121--164},
   issn={0075-4102},
   review={\MR{4178752}},
   doi={10.1515/crelle-2019-0045},
}

\bib{MR3322379}{article}{
   author={Figalli, A.},
   author={Fusco, N.},
   author={Maggi, F.},
   author={Millot, V.},
   author={Morini, M.},
   title={Isoperimetry and stability properties of balls with respect to
   nonlocal energies},
   journal={Comm. Math. Phys.},
   volume={336},
   date={2015},
   number={1},
   pages={441--507},
   issn={0010-3616},
   review={\MR{3322379}},
   doi={10.1007/s00220-014-2244-1},
}

\bib{MR3680376}{article}{
   author={Figalli, Alessio},
   author={Valdinoci, Enrico},
   title={Regularity and Bernstein-type results for nonlocal minimal
   surfaces},
   journal={J. Reine Angew. Math.},
   volume={729},
   date={2017},
   pages={263--273},
   issn={0075-4102},
   review={\MR{3680376}},
   doi={10.1515/crelle-2015-0006},
}

\bib{MR2469027}{article}{
   author={Frank, Rupert L.},
   author={Seiringer, Robert},
   title={Non-linear ground state representations and sharp Hardy
   inequalities},
   journal={J. Funct. Anal.},
   volume={255},
   date={2008},
   number={12},
   pages={3407--3430},
   issn={0022-1236},
   review={\MR{2469027}},
   doi={10.1016/j.jfa.2008.05.015},
}

\bib{MR2799577}{article}{
   author={Fusco, Nicola},
   author={Millot, Vincent},
   author={Morini, Massimiliano},
   title={A quantitative isoperimetric inequality for fractional perimeters},
   journal={J. Funct. Anal.},
   volume={261},
   date={2011},
   number={3},
   pages={697--715},
   issn={0022-1236},
   review={\MR{2799577}},
   doi={10.1016/j.jfa.2011.02.012},
}

\bib{MR2487027}{article}{
   author={Imbert, Cyril},
   title={Level set approach for fractional mean curvature flows},
   journal={Interfaces Free Bound.},
   volume={11},
   date={2009},
   number={1},
   pages={153--176},
   issn={1463-9963},
   review={\MR{2487027}},
   doi={10.4171/IFB/207},
}

\bib{MR4104832}{article}{
   author={Julin, Vesa},
   author={La Manna, Domenico Angelo},
   title={Short time existence of the classical solution to the fractional
   mean curvature flow},
   journal={Ann. Inst. H. Poincar\'{e} C Anal. Non Lin\'{e}aire},
   volume={37},
   date={2020},
   number={4},
   pages={983--1016},
   issn={0294-1449},
   review={\MR{4104832}},
   doi={10.1016/j.anihpc.2020.02.007},
}

\bib{MR3827804}{article}{
   author={Lombardini, Luca},
   title={Approximation of sets of finite fractional perimeter by smooth
   sets and comparison of local and global $s$-minimal surfaces},
   journal={Interfaces Free Bound.},
   volume={20},
   date={2018},
   number={2},
   pages={261--296},
   issn={1463-9963},
   review={\MR{3827804}},
   doi={10.4171/IFB/402},
}

\bib{MR3912427}{article}{
   author={Lombardini, Luca},
   title={Fractional perimeters from a fractal perspective},
   journal={Adv. Nonlinear Stud.},
   volume={19},
   date={2019},
   number={1},
   pages={165--196},
   issn={1536-1365},
   review={\MR{3912427}},
   doi={10.1515/ans-2018-2016},
}

\bib{MR3717439}{article}{
   author={Maggi, Francesco},
   author={Valdinoci, Enrico},
   title={Capillarity problems with nonlocal surface tension energies},
   journal={Comm. Partial Differential Equations},
   volume={42},
   date={2017},
   number={9},
   pages={1403--1446},
   issn={0360-5302},
   review={\MR{3717439}},
   doi={10.1080/03605302.2017.1358277},
}

\bib{MR3930619}{book}{
   author={Maz\'{o}n, Jos\'{e} M.},
   author={Rossi, Julio Daniel},
   author={Toledo, J. Juli\'{a}n},
   title={Nonlocal perimeter, curvature and minimal surfaces for measurable
   sets},
   series={Frontiers in Mathematics},
   publisher={Birkh\"{a}user/Springer, Cham},
   date={2019},
   pages={xviii+123},
   isbn={978-3-030-06242-2},
   isbn={978-3-030-06243-9},
   review={\MR{3930619}},
   doi={10.1007/978-3-030-06243-9},
}

\bib{MR3733825}{article}{
   author={Paroni, Roberto},
   author={Podio-Guidugli, Paolo},
   author={Seguin, Brian},
   title={On the nonlocal curvatures of surfaces with or without boundary},
   journal={Commun. Pure Appl. Anal.},
   volume={17},
   date={2018},
   number={2},
   pages={709--727},
   issn={1534-0392},
   review={\MR{3733825}},
   doi={10.3934/cpaa.2018037},
}

\bib{MR2033060}{article}{
   author={Ponce, Augusto C.},
   title={A new approach to Sobolev spaces and connections to
   $\Gamma$-convergence},
   journal={Calc. Var. Partial Differential Equations},
   volume={19},
   date={2004},
   number={3},
   pages={229--255},
   issn={0944-2669},
   review={\MR{2033060}},
   doi={10.1007/s00526-003-0195-z},
}

\bib{MR3951024}{article}{
   author={S\'{a}ez, Mariel},
   author={Valdinoci, Enrico},
   title={On the evolution by fractional mean curvature},
   journal={Comm. Anal. Geom.},
   volume={27},
   date={2019},
   number={1},
   pages={211--249},
   issn={1019-8385},
   review={\MR{3951024}},
   doi={10.4310/CAG.2019.v27.n1.a6},
}

\bib{MR2948285}{article}{
   author={Savin, Ovidiu},
   author={Valdinoci, Enrico},
   title={$\Gamma$-convergence for nonlocal phase transitions},
   journal={Ann. Inst. H. Poincar\'{e} C Anal. Non Lin\'{e}aire},
   volume={29},
   date={2012},
   number={4},
   pages={479--500},
   issn={0294-1449},
   review={\MR{2948285}},
   doi={10.1016/j.anihpc.2012.01.006},
}

\bib{MR3090533}{article}{
   author={Savin, Ovidiu},
   author={Valdinoci, Enrico},
   title={Regularity of nonlocal minimal cones in dimension 2},
   journal={Calc. Var. Partial Differential Equations},
   volume={48},
   date={2013},
   number={1-2},
   pages={33--39},
   issn={0944-2669},
   review={\MR{3090533}},
   doi={10.1007/s00526-012-0539-7},
}

\bib{MR4058510}{article}{
   author={Seguin, Brian},
   title={A fractional notion of length and an associated nonlocal
   curvature},
   journal={J. Geom. Anal.},
   volume={30},
   date={2020},
   number={1},
   pages={161--181},
   issn={1050-6926},
   review={\MR{4058510}},
   doi={10.1007/s12220-018-00140-9},
}

\bib{MR1111612}{article}{
   author={Visintin, Augusto},
   title={Generalized coarea formula and fractal sets},
   journal={Japan J. Indust. Appl. Math.},
   volume={8},
   date={1991},
   number={2},
   pages={175--201},
   issn={0916-7005},
   review={\MR{1111612}},
   doi={10.1007/BF03167679},
}

\end{biblist}
\end{bibdiv}
\end{document}